\theoremstyle{plain}
\newtheorem{theorem}{Theorem}[section]
\newtheorem{proposition}[theorem]{Proposition}
\newtheorem{corollary}[theorem]{Corollary}
\newtheorem{lemma}[theorem]{Lemma}
\theoremstyle{definition}
\newtheorem{example}[theorem]{Example}
\newcommand{\Pelc}[1]{\left(\Sigma X_n\right)_p}
\newcommand{\goeso}{\xrightarrow{\mathrm{o}}}	
\newcommand{\RR}{\mathbb{R}}
\newcommand{\NN}{\mathbb{N}}
\DeclareSymbolFont{bbold}{U}{bbold}{m}{n}
\DeclareSymbolFontAlphabet{\mathbbold}{bbold}
\DeclareMathOperator{\sgn}{sgn}
\DeclareMathOperator{\Span}{span}
\renewcommand{\le}{\leqslant}
\renewcommand{\ge}{\geqslant}
\begin{document}

\title[Positive automorphisms of algebras of operators]{On positive automorphisms of algebras of operators on atomic Archimedean vector lattices}
\author{G. Cigler}
\address{Faculty of Mathematics and Physics, University of Ljubljana,
  Jadranska 19, 1000 Ljubljana, Slovenia \ \ \ and \ \ \
  Institute of Mathematics, Physics, and Mechanics, Jadranska 19, 1000 Ljubljana, Slovenia}
\email{gregor.cigler@fmf.uni-lj.si}

\author{M. Kandi\'c}
\address{Faculty of Mathematics and Physics, University of Ljubljana,
  Jadranska 19, 1000 Ljubljana, Slovenia \ \ \ and \ \ \
  Institute of Mathematics, Physics, and Mechanics, Jadranska 19, 1000 Ljubljana, Slovenia}
\email{marko.kandic@fmf.uni-lj.si}

 \keywords{vector lattice, order algebra automorphism, inner automorphism, atom, order continuous operator}
\subjclass[2020]{Primary 46A40, 47B65; Secondary 16W20, 47L05.}

\date{\today}

\begin{abstract}
Let $X$ be an Archimedean vector lattice. We investigate subalgebras of $\mathscr{L}(X)$ consisting of regular operators that contain all rank-one operators of the form $a \otimes \varphi_b$, where $a$ and $b$ are atoms of $X$ and $\varphi_b$ denotes the coordinate functional associated with $b$. Our main result shows that every positive automorphism of such a subalgebra contained in $\mathscr{L}(c_{00}(\Lambda))$, is necessarily spatial, meaning that it is implemented by a transformation of the form
$$
T \mapsto P D\, T\, D^{-1} P^{-1},
$$
where $P$ is a permutation operator and $D$ is a positive diagonal operator. An important tool for this analysis—one that is also of independent interest—is the Kakutani representation theorem, which we use to establish that every finite-dimensional vector subspace of $X$ is order closed.
\end{abstract}

\maketitle

\section{Introduction}\label{Section: Introduction}

Let $F$ be a field and let $\Phi\colon M_n(F)\to M_n(F)$ be an algebra automorphism of the algebra $M_n(F)$ of all $n\times n$ matrices over $F$. A classical result from linear algebra states that $\Phi$ is inner; that is, there exists an invertible matrix $A\in M_n(F)$ such that
$$
\Phi(T)=A T A^{-1}
$$
for all $T\in M_n(F)$. This follows readily from the Noether--Skolem theorem (see \cite[Theorem~3.14]{FD93}), which asserts that every automorphism of a finite-dimensional central simple $F$-algebra is inner.

In 1940, Eidelheit established the Banach space analogue (see \cite{Eid40}), proving that for a Banach space $X$ every algebra automorphism of the Banach algebra $\mathscr{B}(X)$ is inner. Later, Sourour extended the investigation to the Banach lattice setting (see \cite[Theorem~2]{Sou86}), showing that if $X$ is a Banach lattice, then every order-preserving algebra automorphism of the algebra $\mathscr{L}_r(X)$ of regular operators is inner and is implemented by a lattice isomorphism.

The present paper appears to be the first systematic study of order automorphisms of the algebra $\mathscr{L}_r(X)$ in the context of normed lattices $X$ that are not assumed to be complete. Somewhat unexpectedly, even the purely atomic case already exhibits substantial subtleties. 
In our main result \Cref{reprezentacija avtomorfizma} we assume that $X$ is an Archimedean atomic vector lattice and we provide a description of positive algebra automorphisms for subalgebras of $\mathscr L_n(X)$ containing operators $a\otimes \varphi_b$ for all atoms $a$ and $b$ from $X$.
In the special case where $X=c_{00}(\Lambda)$ is the vector lattice of finitely supported functions on a nonempty set $\Lambda$, we further obtain that every order-preserving algebra automorphism of $\mathscr{L}(c_{00}(\Lambda))$ or of $\mathscr{B}(c_{00}(\Lambda))$ is inner.

This paper is organized as follows. In \Cref{preliminaries} we collect the preliminary material used throughout the paper. In \Cref{Section: order closedness} we apply the Kakutani representation theorem to prove that every finite-dimensional vector subspace of an Archimedean vector lattice is order closed. Although we require this only in the special case of one-dimensional subspaces, the result is of independent interest, as it contributes to the broader understanding of order convergence and order-closed sets in vector lattices. This result will be used in \Cref{operators_atomic_VL}, where we study algebras of regular operators on Archimedean vector lattices generated by operators of the form $a\otimes \varphi_b$, with $a$ and $b$ atoms and $\varphi_b$ the coordinate functional associated with $b$.

In \Cref{automorphisms of algebras} we establish \Cref{reprezentacija avtomorfizma}, the main theorem of the paper. In \Cref{c00 - operator theoretical} we first obtain an operator-theoretic characterization of vector lattices of the form $c_{00}(\Lambda)$ for some nonempty set $\Lambda$. In particular, we show that an Archimedean vector lattice $X$ is lattice isomorphic to $c_{00}(\Lambda)$ if and only if
$$
\mathscr{L}(X,Y)=\mathscr{L}_b(X,Y)=\mathscr{L}_n(X,Y)
$$
for every Archimedean vector lattice $Y$. When $Y=c_{00}(\Lambda)$, this characterization allows us to rewrite \Cref{reprezentacija avtomorfizma} explicitly in that setting, yielding a concrete description of order-preserving automorphisms of the corresponding operator algebras.

Finally, in \Cref{Concluding} we provide concluding remarks concerning order-bounded functionals and the order dual of the lexicographic product of vector lattices.

\section{Preliminaries}\label{preliminaries}

Let $X,Y$ be vector lattices. A subset of $X$ is \emph{order bounded} if it is contained in some interval. 
By $\mathscr L(X,Y)$, $\mathscr L_b(X,Y)$ and $\mathscr L_n(X,Y)$ we denote the vector space of all linear operators, the space of all order bounded operators and  the space of all order continuous operators from $X$ to $Y$, respectively. If $X$ and $Y$ are normed lattices, then the space of all bounded operators is denoted by $\mathscr B(X,Y)$. If $X=Y$ we write $\mathscr L(X)$ instead of $\mathscr L(X,Y)$. Similarly, we define the symbols $\mathscr L_b(X)$, $\mathscr L_n(X)$ and $\mathscr B(X)$.
If $Y=\mathbb R$, we write 
$X':=\mathscr L(X,\mathbb R)$, $X^\sim:=\mathscr L_b(X,\mathbb R)$ and $X_n^\sim:=\mathscr L_n(X,\mathbb R)$. If $X$ is a normed lattice, we write $X^*:=\mathscr B(X,\mathbb R)$. The proof of the following well-known lemma is included for the sake of completeness.  

\begin{lemma}\label{order bounded operator functional}
Let $u$ be a nonzero vector of an Archimedean vector lattice $X$. For a linear functional $\varphi \in X'$ consider the rank one operator $u\otimes \varphi$.
 \begin{enumerate}
 \item $u\otimes \varphi$ is order bounded if and only if $\varphi$ is order bounded.
 \item $u\otimes \varphi$ is order continuous if and only if $\varphi$ is order continuous.
 \end{enumerate}
\end{lemma}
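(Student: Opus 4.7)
The plan is to exploit the identity $(u\otimes\varphi)(x)=\varphi(x)u$ to translate each order-theoretic property of $u\otimes\varphi$ into a scalar statement about $\varphi$. The forward implications in both (i) and (ii) should follow almost immediately from this, while the converses will require the Archimedean property together with $u\neq 0$ to recover scalar information from vector bounds.

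For the forward directions I would argue as follows. If $\varphi$ is order bounded, say $\abs{\varphi(x)}\le M$ on an interval $[a,b]$, then $\abs{\varphi(x)u}\le M\abs{u}$ shows $u\otimes\varphi$ is order bounded. If $\varphi$ is order continuous and $x_\alpha\goeso 0$, then $\varphi(x_\alpha)\to 0$ in $\mathbb{R}$, so for each $n$ eventually $\abs{\varphi(x_\alpha)u}\le \abs{u}/n$; since the sequence $\abs{u}/n$ decreases to $0$ by the Archimedean property, this gives $(u\otimes\varphi)(x_\alpha)\goeso 0$.

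For the converse of (i), I would assume $\varphi$ is unbounded on some interval $[a,b]$ and pick $x_n\in[a,b]$ with $\abs{\varphi(x_n)}\ge n$; since $(u\otimes\varphi)([a,b])$ is order bounded, there exists $v\ge 0$ with $n\abs{u}\le\abs{\varphi(x_n)u}\le v$ for every $n$, contradicting $u\ne 0$ by the Archimedean property. For the converse of (ii), I would suppose $x_\alpha\goeso 0$ while $\varphi(x_\alpha)\not\to 0$, extract a cofinal subnet $(x_{\alpha_\gamma})$ with $\abs{\varphi(x_{\alpha_\gamma})}\ge\varepsilon>0$, and invoke order continuity of $u\otimes\varphi$ to produce a net $y_\beta\downarrow 0$ eventually dominating $\abs{\varphi(x_\alpha)u}$; cofinality of the subnet then forces $\varepsilon\abs{u}\le y_\beta$ for every $\beta$, so $\varepsilon\abs{u}=0$, contradicting $u\ne 0$.

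The only delicate point is the last step: one must use the cofinality of the subnet carefully against the "eventually" quantifier in the definition of order convergence, to ensure that for each index $\beta$ of the dominating net $y_\beta$ one can exhibit a subnet index $\alpha_\gamma$ lying past the relevant threshold. Once this is done correctly, the entire argument reduces to bookkeeping around the identity $(u\otimes\varphi)(x)=\varphi(x)u$ combined with two applications of the Archimedean property.
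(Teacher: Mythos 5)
Your proposal is correct and follows essentially the same route as the paper: translate everything through the identity $(u\otimes\varphi)(x)=\varphi(x)u$ and use the Archimedean property together with $u\neq 0$ for the converse directions. The only difference is that you spell out the cofinal-subnet argument for the converse of (ii), which the paper's proof leaves implicit by simply asserting that $\varphi(x_\alpha)u\goeso 0$ forces $\varphi(x_\alpha)\to 0$.
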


\begin{proof}
(i) Suppose first that $\varphi$ is order bounded and choose any interval $[x,y]\subseteq X$. There exists a scalar $\lambda>0$ such that
$\varphi([x,y])\subseteq [-\lambda,\lambda]$. Hence, for any $z\in [x,y]$ we have
$$|(u\otimes \varphi)(z)|=|\varphi(z)u|= |\varphi(z)| \,|u| \leq \lambda |u|$$ which proves that $u\otimes \varphi$ is order bounded.

Assume now that $u\otimes \varphi$ is order bounded and choose any interval $[x,y]\subseteq X$. There exists a positive vector $w$ such that
$(u\otimes \varphi)([x,y]) \subseteq [-w,w]$. Hence, for any $z\in [x,y]$ we have
$|\varphi(z)|\,|u|=|\varphi(z)u|=|(u\otimes\varphi)(z)|\leq w$. Since $X$ is Archimedean, the set $\varphi([x,y])$ needs to be bounded.

(ii) Suppose now that $x_\alpha \goeso 0$ in $X$. If $\varphi$ is order continuous, then $\varphi(x_\alpha)\to 0$. Since $X$ is Archimedean, we have
$(u\otimes \varphi)(x_\alpha)=\varphi(x_\alpha)u \goeso 0$. Conversely, suppose that $u\otimes \varphi$ is order continuous. From $\varphi(x_\alpha)u=(u\otimes \varphi)(x_\alpha)\goeso 0$ it follows $\varphi(x_\alpha)\to 0$.
\end{proof}

A positive vector $a$ of a vector lattice $X$ is called an \emph{atom} whenever it follows from $0\leq x, y\leq  a$ and $x\perp y$ that $x=0$ or $y=0$. The band $A$ generated by all atoms of $X$ is called the \emph{atomic part} of $X$. If $X=A$, then $X$ is an \emph{atomic vector lattice}. If $X\neq A$, then $C:=A^d$ is the \emph{continuous part} of $X$. By \cite[Theorem 1.36]{Aliprantis:06} the ideal $A+C$ is always order dense in $X$. If $X$ is Archimedean and not atomic, then $C^d=A^{dd}=A\neq X$, so that $C\neq \{0\}$. If the principal ideal $I_a$ generated by $a$ equals to the span $\mathbb Ra$ of $a$, then $a$ is called a \emph{discrete element}. By \cite[Lemma 26.2]{LZ71} it follows that every discrete element is also an atom. Moreover, by \cite[Theorem 26.4]{LZ71}, in an Archimedean vector lattice  the principal band $B_a$ generated by an atom $a$ is a projection band which is equal to $\mathbb R a$. In particular, in Archimedean vector lattices, the sets of atoms and discrete elements coincide. 

Let $a$ be an atom in an Archimedean vector lattice $X$. Then the decomposition 
$$X=\mathbb Ra \oplus \{a\}^{d}$$ is an order decomposition meaning that $x=x_1+x_2\in \mathbb Ra \oplus \{a\}^{d}$ is positive if and only if $x_1$ and $x_2$ are positive in $\mathbb Ra$ and $\{a\}^d$, respectively. For every $x\in X$ there exists a unique scalar $\varphi_a(x)$ and a unique vector $y_x \perp a$ such that $x=\varphi_a(x)a + y_x$. Since $X=\mathbb Ra \oplus \{a\}^{d}$ is an order direct sum, we have that $\varphi_a\colon X\to\mathbb R$ is a lattice homomorphism. Moreover, order continuity of the band projection onto the projection band $\mathbb Ra$ yields order continuity of $\varphi_a$. 

By Zorn's lemma type argument, every  vector lattice with at least one atom admits a maximal family $\mathcal A$ of pairwise disjoint atoms. Since for any two atoms $a_1$ and $a_2\in X$ in an Archimedean vector lattice we have that either $a_1\wedge a_2=0$ or $a_1\wedge a_2$ is simultaneously a nonzero multiple of both $a_1$ and $a_2$, the span of $\mathcal A$ is independent of the choice of $\mathcal A$. 
Therefore, an Archimedean vector lattice $X$ is atomic if and only if the ideal $I_{\mathcal A}:=\Span \mathcal A$ generated by $\mathcal A$ is order dense in $X$ for any maximal family $\mathcal A$ of pairwise disjoint atoms in $X$. If $a$ and $a'$ are  atoms in $X$ such that $a'=\lambda a$ for some $\lambda>0$, then it is easy to see that $\varphi_{a'}=\frac{1}{\lambda}\varphi_a$. 
 
Suppose now that $X$ is an atomic Archimedean vector lattice and let $\mathcal A$ be a maximal family of pairwise disjoint atoms in $X$. The following lemma characterizes positivity of a given vector $x\in X$. 

\begin{lemma}\label{pozfia je poz}
    A vector $x$ in an atomic Archimedean vector lattice is positive if and only if $\varphi_a(x)\geq 0$ for every $a\in \mathcal A$. 
\end{lemma}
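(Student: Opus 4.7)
The plan is to handle the two implications separately, with almost all the work concentrated in the nontrivial direction.

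For the forward direction, the argument is immediate from facts already recalled in the preliminaries: each $\varphi_a$ is a lattice homomorphism, hence positive, so $x\ge 0$ forces $\varphi_a(x)\ge 0$ for every $a\in\mathcal A$.

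For the converse, suppose that $\varphi_a(x)\ge 0$ for every $a\in\mathcal A$ and decompose $x=x^+-x^-$. Since $\varphi_a$ is a lattice homomorphism, it commutes with the negative part, so
$$
\varphi_a(x^-)=\bigl(\varphi_a(x)\bigr)^-=0 \qquad \text{for every } a\in\mathcal A.
$$
Using the uniqueness of the decomposition $X=\mathbb R a\oplus \{a\}^d$ recalled above, this forces $x^-\in\{a\}^d$ for every $a\in\mathcal A$; equivalently, $x^-$ is disjoint from each element of the family $\mathcal A$.

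It remains to upgrade this to $x^-\in I_{\mathcal A}^d$. For any $z=\sum_{i=1}^n\lambda_i a_i\in I_{\mathcal A}$ one has $|z|\le \sum_{i=1}^n|\lambda_i|a_i$, so by the Riesz decomposition property
$$
|x^-|\wedge |z|\le \sum_{i=1}^n |\lambda_i|\,\bigl(|x^-|\wedge a_i\bigr)=0,
$$
and hence $x^-\in I_{\mathcal A}^d$. Since $X$ is atomic, $I_{\mathcal A}$ is order dense in $X$, i.e.\ $I_{\mathcal A}^{dd}=X$, which in an Archimedean vector lattice implies $I_{\mathcal A}^d=\{0\}$. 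Consequently $x^-=0$ and $x=x^+\ge 0$.

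I do not expect a genuine obstacle here: the proof is essentially an assembly of standard facts about atoms, lattice homomorphisms, and order density already collected in the preliminaries. The only mildly delicate step is passing from disjointness with each member of $\mathcal A$ to disjointness with the whole ideal $I_{\mathcal A}$, but this is routine once one uses the pairwise disjointness of $\mathcal A$ together with the Riesz decomposition property.
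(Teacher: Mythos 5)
Your proof is correct. The paper actually states this lemma without proof, so there is nothing to compare against; your argument is the standard one (positivity of the coordinate functionals for the forward direction, and for the converse the chain $\varphi_a(x^-)=(\varphi_a(x))^-=0$, hence $x^-\perp a$ for all $a\in\mathcal A$, hence $x^-\in I_{\mathcal A}^d=\{0\}$ by order density of $I_{\mathcal A}$), and every step is justified by facts already recorded in the preliminaries.
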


In particular, a vector $x\in X$ is the zero vector if and only if $\varphi_a(x)=0$ for every $a\in \mathcal A$. The family $\mathcal F$ of all finite subsets of $\mathcal A$ is directed with set inclusion. Since  $\Span \mathcal A$ is order dense in $X$, for every positive vector $x\in X$ we have
\begin{equation}\label{zapis z atomi}
x=\sup_{a\in \mathcal A}\varphi_a(x)a=\sup_{F\in \mathcal F}\sup_{a\in F}\varphi_a(x)a=\sup_{F\in \mathcal F}\sum_{a\in F}\varphi_a(x)a. 
\end{equation}
The remaining unexplained facts about vector and normed lattices can be found in \cite{Aliprantis:06} and \cite{LZ71}.

Let $\mathscr A$ be a subalgebra of an algebra $\mathscr B$. An automorphism $\Phi$ of $\mathscr A$ is \emph{inner} if there exists an invertible element $T\in \mathscr A$ such that $\Phi(A)=TAT^{-1}$ for all $A\in \mathscr A$. If there exists an invertible element $T\in \mathscr B$ such that $\Phi(A)=TAT^{-1}$ for all $A\in \mathscr A$, then $\Phi$ is called \emph{spatial}. Spatial automorphism induced by $T$ is denoted by $\Phi_T$. Invertible elements $T_1$ and $T_2$ from $\mathscr B$ induce the same spatial automorphism of $\mathscr A$ if and only if  
$$T_2^{-1}T_1A=AT_2^{-1}T_1$$ for all $A\in \mathscr A$. The latter is equivalent to the fact that $T_2^{-1}T_1$ belongs to the centralizer 
$$C_{\mathscr B}(\mathscr A)=\{T\in \mathscr B:\; AT=TA \textrm{ for all } A\in \mathscr A\}$$
of $\mathscr A$ in $\mathscr B$.
If $\mathscr A=\mathscr B$, then $C_{\mathscr B}(\mathscr B)$ is called the center of $\mathscr B$ and is denoted by $Z(\mathscr B)$.
 It is well known that in the case  $\mathscr A=\mathscr B=\mathscr L(X)$ operators $T_1$ and $T_2$ induce the same inner automorphism if and only if $T_2^{-1}T_1$ is a nonzero scalar multiple of the identity operator.

\section{Order closedness of finite-dimensional vector subspaces}\label{Section: order closedness}

In this section we prove (see \Cref{n-dimensional order closed}) that every finite-dimensional vector subspace of an Archi\-me\-de\-an vector lattice is order closed. 
The value of this result is twofold. First, we will use it in \Cref{automorphisms of algebras}, where we study positive automorphisms of certain algebras of order continuous operators on atomic Archimedean vector lattices. Second, it is valuable in its own right, since it contributes to the broader development and understanding of concepts related to order convergence. 

For the proof of \Cref{n-dimensional order closed} we need the following lemma whose proof is provided  for reader's sake. 

\begin{lemma}\label{delta_basis}
Let $\Omega$ be a nonempty set and let $V$ be a finite-dimensional subspace of $\mathbb R^\Omega$. Then there exists a basis $f_1,\ldots,f_n$ for $V$ and points $a_1,\ldots, a_n$ in $\Omega$ such that $f_i(a_j)=\delta_{ij}$.  
\end{lemma}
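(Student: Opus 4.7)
The plan is to work with the dual space $V^*$ and use evaluation functionals to extract a suitable set of points. Let $n=\dim V$. For every point $a\in\Omega$, the evaluation map $\mathrm{ev}_a\colon V\to\mathbb{R}$, $f\mapsto f(a)$, is a linear functional on $V$.

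First I would show that the family $\{\mathrm{ev}_a:a\in\Omega\}$ spans $V^*$. Indeed, if some $f\in V$ satisfies $\mathrm{ev}_a(f)=f(a)=0$ for every $a\in\Omega$, then $f=0$ as an element of $\mathbb{R}^\Omega$; hence the common kernel of all evaluation functionals is trivial, and so (using that $V$ is finite-dimensional and thus reflexive as a vector space) the linear span of $\{\mathrm{ev}_a:a\in\Omega\}$ inside $V^*$ must be the whole of $V^*$. By a standard basis-extraction argument from a spanning set, I can pick finitely many points $a_1,\ldots,a_n\in\Omega$ such that $\mathrm{ev}_{a_1},\ldots,\mathrm{ev}_{a_n}$ is a basis of $V^*$.

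Next I would take the basis $f_1,\ldots,f_n$ of $V$ dual to the basis $\mathrm{ev}_{a_1},\ldots,\mathrm{ev}_{a_n}$ of $V^*$, which by definition satisfies
\[
f_i(a_j)=\mathrm{ev}_{a_j}(f_i)=\delta_{ij},
\]
and this gives the required basis and points.

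No step here is really delicate; the only point that deserves care is the verification that evaluation functionals span $V^*$, which is a direct application of the fact that a finite-dimensional space and its double dual are naturally isomorphic (equivalently, that if a family of functionals has trivial intersection of kernels, it spans the dual). The remainder is purely a choice of dual basis.
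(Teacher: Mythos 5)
Your proof is correct, but it takes a genuinely different route from the paper. The paper argues by induction on $\dim V$: it picks a point $a_1$ where the first basis vector does not vanish, subtracts suitable multiples of that vector from the others so that they all vanish at $a_1$, applies the induction hypothesis to the resulting $n$-dimensional subspace, and then corrects the first vector — essentially a Gaussian-elimination construction carried out entirely inside $V$. You instead pass to the dual space: since $V$ sits inside $\mathbb{R}^\Omega$, the evaluation functionals $\mathrm{ev}_a$ have trivial common kernel on $V$, hence (by finite-dimensional reflexivity, equivalently the annihilator argument) they span $V^*$; extracting a basis $\mathrm{ev}_{a_1},\ldots,\mathrm{ev}_{a_n}$ from this spanning family and taking the predual basis $f_1,\ldots,f_n$ in $V$ gives $f_i(a_j)=\delta_{ij}$ at once. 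Every step of this is standard and sound — the key point, that a family of functionals with trivial common kernel spans the dual of a finite-dimensional space, is exactly the annihilator/double-dual fact you cite. What your approach buys is brevity and a conceptual explanation of \emph{why} the lemma holds (the points $a_j$ are just a dual basis drawn from a spanning set of evaluations); what the paper's induction buys is a fully explicit, self-contained construction using nothing beyond bases and linear combinations, which fits the elementary tone of that section. Either proof would serve.
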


\begin{proof}
We will prove the statement by induction on the dimension of $V$. The statement is clear if $\dim V=1$. 
Assume now that the statement holds for every $n$-dimensional vector subspace of $\mathbb R^\Omega$ and let $V$ be an $n+1$-dimensional subspace of $\mathbb R^\Omega$ with basis $\{h_1,h_2,\ldots, h_{n+1}\}$. 

Pick $a_{1}\in \Omega$ such that $h_{1}(a_{1})\neq 0$  
and for $2\leq i\leq n+1$ define  
$$g_i = h_i - \frac{h_i(a_1)}{h_{1}(a_{1})}h_{1}.$$
Then $g_i(a_1)=0$ for all $i>1$ and the set $\{h_1,g_2,\ldots,g_{n+1}\}$ is a basis for $V$. 
By the induction hypothesis, one can find $a_2,\ldots,a_{n+1}$ and $f_2,\ldots,f_{n+1}$ such that 
$f_i(a_j)=\delta_{ij}$ for $2\leq i,j \leq n+1$ and $\Span\{f_2,\ldots,f_{n+1}\}=\Span\{g_2,\ldots,g_{n+1}\}$.
Therefore, $f_i(a_1)=0$ for all $i>1$.
To finish the proof, we first define
$$g_1 = h_1 - \sum_{i=2}^{n+1} h_1(a_i)f_i$$ and then 
$f_1=\frac{1}{g_1(a_1)}g_1$.
\end{proof}
 
To prove \Cref{n-dimensional order closed} we need some more preparation. Let $X$ be a vector lattice. Recall that a net $(x_\alpha)_\alpha$ in $X$ \emph{converges in order} to a vector $x$ whenever there exists another net $(y_\beta)_\beta$ with $y_\beta\searrow 0$ such that for each $\beta$ there exists $\alpha_0$ such that for all $\alpha\geq \alpha_0$ we have 
$$|x_\alpha-x| \leq y_\beta.$$
Following \cite{GL18}, a set $A\subseteq X$ is \emph{order closed} in $X$ whenever it consists precisely of those elements $x\in X$ for which there exists a net $(x_\alpha)_\alpha$ in $A$ that converges in order to $x$ in $X$. Suppose additionally that $X$ is Archimedean. Then, for each positive vector $x\in X$, the principal ideal $I_x$ generated by $x$ admits a lattice norm $\|\cdot\|_x$ defined by
$$\|y\|_x:=\inf\{\lambda\geq 0:\; |y|\leq \lambda x\}.$$ If $(I_x, \|\cdot\|_x)$ is norm complete, then it is an AM-space with a unit $x$.  By the Kakutani representation theorem (see e.g. \cite[Theorem 4.29]{Aliprantis:06}), $I_x$ is lattice isometric to some $C(K)$ for some compact Hausdorff space $K$ with $x$ being mapped to the constant one function. If $(I_x,\|\cdot\|_x)$ is not norm complete, then its norm completion is an AM-space with a unit $x$, so that $I_x$ is a norm dense sublattice of some $C(K)$ space.  

\begin{theorem}\label{n-dimensional order closed}
Every finite dimensional vector subspace of an Archimedean vector lattice is order closed.
\end{theorem}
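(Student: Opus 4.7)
The plan is to localise the situation to the principal ideal generated by a positive dominating vector and then finish with a short finite-dimensional compactness argument inside the AM-space-with-unit structure recalled in the preliminaries.

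To set up, I would take a net $(x_\alpha)\subseteq V$ with $x_\alpha\goeso x$, fix a witnessing net $y_\beta\downarrow 0$ with $|x_\alpha-x|\le y_\beta$ eventually for each $\beta$, pick one $\beta_0$, and put $u:=|x|+y_{\beta_0}$. After discarding an initial segment, $|x_\alpha|\le u$ holds throughout, so $x$ and every $x_\alpha$ lie in $I_u$. Replacing $y_\beta$ by $\tilde y_\beta:=y_\beta\wedge y_{\beta_0}$ for $\beta\ge\beta_0$ preserves the witnessing property while relocating the dominating net into $I_u$; and because $I_u$ is an order ideal, $\tilde y_\beta$ still decreases to $0$ when the infimum is computed inside $I_u$. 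Thus $x_\alpha\goeso x$ holds in $(I_u,\|\cdot\|_u)$, with a tail of $(x_\alpha)$ sitting inside the finite-dimensional subspace $V\cap I_u$.

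Next I would invoke the AM-space structure of $(I_u,\|\cdot\|_u)$ guaranteed by the Kakutani picture in the preliminaries. From $|x_\alpha|\le u$ one gets $\|x_\alpha\|_u\le 1$, so the tail is norm-bounded inside the finite-dimensional normed space $V\cap I_u$, whose closed unit ball is compact. Passing to a subnet, $x_{\alpha'}\to h$ in $\|\cdot\|_u$ for some $h\in V\cap I_u$. The estimate $|x_{\alpha'}-h|\le \|x_{\alpha'}-h\|_u\cdot u$ combined with $\|x_{\alpha'}-h\|_u\to 0$ yields $x_{\alpha'}\goeso h$ in $I_u$; since the dominating decreasing multiples of $u$ also decrease to $0$ in the Archimedean lattice $X$, the same convergence $x_{\alpha'}\goeso h$ holds in $X$.

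As $(x_{\alpha'})$ is a subnet of $(x_\alpha)$, it also satisfies $x_{\alpha'}\goeso x$ in $X$, and uniqueness of order limits in an Archimedean vector lattice forces $h=x$. Hence $x=h\in V\cap I_u\subseteq V$, proving that $V$ is order closed. The step demanding the most care is the double-bookkeeping between order convergence in $X$ and in $I_u$: one must trim the witness net down into $I_u$ without breaking either domination or the decrease to $0$, and afterwards one must transport the produced order convergence back out to $X$. Both reductions rest on the ideal property of $I_u$, which makes infima in $I_u$ coincide with infima in $X$ whenever they lie in $I_u$.
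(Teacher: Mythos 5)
Your argument is correct, but it takes a genuinely different route from the paper's. The paper embeds the principal ideal $I_x$ into $C(K)$ via the Kakutani representation theorem, constructs (via \Cref{delta_basis}) a basis $f_1,\dots,f_n$ of $V$ biorthogonal to point evaluations $f\mapsto f(a_i)$, uses the fact that these evaluations are lattice homomorphisms to deduce that the coordinate nets $(\lambda_\alpha^{(i)})_\alpha$ are bounded, and then applies Bolzano--Weierstrass coordinatewise. You instead work directly with the lattice norm $\|\cdot\|_u$ on the principal ideal $I_u$ generated by $u=|x|+y_{\beta_0}$: a tail of the net lies in the finite-dimensional subspace $V\cap I_u$ with $\|x_\alpha\|_u\le 1$, compactness of the unit ball of a finite-dimensional normed space produces a $\|\cdot\|_u$-convergent subnet, and the estimate $|x_{\alpha'}-h|\le(\|x_{\alpha'}-h\|_u+\varepsilon)u$ together with the Archimedean property converts norm convergence back into order convergence in $X$; uniqueness of order limits then identifies $h$ with $x\in V$. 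Your approach is leaner: it dispenses entirely with the Kakutani theorem and with \Cref{delta_basis} (your invocation of the ``AM-space structure'' is in fact superfluous --- only the facts that $\|\cdot\|_u$ is a norm on $I_u$, which requires Archimedeanness, and that finite-dimensional unit balls are compact are used), whereas the paper's proof showcases the representation-theoretic machinery it advertises. One cosmetic remark: the bookkeeping in your first paragraph about relocating the witness net $\tilde y_\beta$ into $I_u$ and re-verifying order convergence inside $I_u$ is never used in the eventual argument --- all you need from that paragraph is that a tail of $(x_\alpha)$ is dominated by $u$ --- so it could be deleted without loss.
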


\begin{proof}
Let $V$ be a finite dimensional vector subspace of an Archimedean vector lattice $X$ and let $(v_\alpha)_\alpha$
be a net in $V$ which converges in order to some $v$ in $X$. By passing to a tail, if necessary, we may assume that the net $(v_\alpha)_{\alpha}$ is order bounded. Therefore, 
there exists a positive vector $y$ in $X$ such that $|v_\alpha|\leq y$ for every $\alpha$. 
We define 
$$x=y+\sum_{i=1}^n |e_i|$$
where $\{e_1, \ldots, e_n\}$ is some basis of $V$. 
Clearly, $V$ is contained in the principal ideal $I_x$.  
By the Kakutani representation theorem there exists a compact Hausdorff space $K$ such that $(I_x, \|\cdot\|_x)$ embeds into $C(K)$ as a norm dense sublattice with $x$ being mapped to the constant one function. 
Hence, without loss of generality we may assume that $I_x$ is contained in $C(K)$. 

By \Cref{delta_basis}, there exists a basis $\{f_1,\ldots,f_n\}$ for $V$ and points $a_1,\ldots, a_n$ in $K$ such that $f_i(a_j)=\delta_{ij}$. Hence, for each $\alpha$ we can write 
$$v_\alpha = \lambda_\alpha^{(1)} f_1+\cdots \lambda_\alpha^{(n)}f_n$$
for suitable scalars $\lambda_\alpha^{(1)}, \ldots, \lambda_\alpha^{(n)}$. 
Since the net $(v_\alpha)_\alpha$ is order bounded in $C(K)$ and for each $1\leq i\leq n$ the mapping $\varphi_i\colon C(K)\to \mathbb R$ defined by $\varphi(f)=f(a_i)$ is a lattice homomorphism, the net $(\lambda_\alpha^{(i)})_\alpha$ is bounded in $\mathbb R$ for each $1\leq i\leq n$. By passing to appropriate subnets, we may assume that for each $1\leq i\leq n$ the net $(\lambda_\alpha^{(i)})_\alpha$ already converges to some $\lambda^{(i)}\in \mathbb R$. This implies that the net $(v_\alpha)_\alpha$ converges in order in $I_x\subseteq X$ to $\lambda^{(1)} f_1+\cdots+\lambda^{(n)} f_n \in I_x$.
Since order limits are unique, we have $v=\lambda^{(1)} f_1+\cdots+\lambda^{(n)} f_n\in V$.
\end{proof}
 
\section{Operators on atomic vector lattices}\label{operators_atomic_VL}

In this section, we study and characterize order continuous operators on an Archi\-me\-de\-an vector lattice $X$ belonging to the algebra $\mathscr A_0 \subseteq \mathscr L_n(X)$ defined as follows. If $X$ does not contain atoms, we set $\mathscr A_0=\{0\}$. Otherwise, choose a maximal family $\mathcal A$ of pairwise disjoint atoms in $X$ and define $\mathscr A_0$ to be the algebra generated by all operators of the form $a\otimes \varphi_b$ for all $a, b\in \mathcal A$. It is easy to see that
\begin{equation}\label{A_0 def}
    \mathscr A_0 = \Span\{a\otimes \varphi_b:\; a,b\in \mathcal A\}.
\end{equation} 
Since each coordinate functional $\varphi_b$ is order continuous, we indeed have $\mathscr A_0 \subseteq \mathscr L_n(X)$.  

In \Cref{aabb} we prove that an order continuous operator $T$ belongs to $\mathscr A_0$ if and only if its ``matrix representation" has only finitely many nonzero entries. Moreover, \Cref{urejenostna gostost} yields that every positive operator $T\in \mathscr L_n(X)$ satisfying $TA\subseteq A$ and $T|_C=0$ is a supremum of an increasing net of positive operators from $\mathscr A_0$. In particular, we prove that whenever $X$ is Dedekind complete then $\mathscr A_0$ is an order dense vector sublattice of $\mathscr L_n(X)$ if and only if $X$ is atomic. 

We start with the following simple lemma which will be needed throughout the paper. 

\begin{lemma}\label{nicelni operator atomi}
Let $T\colon X\to Y$ be a linear operator between Archimedean vector lattices and let $\mathcal A$ and $\mathcal B$ be maximal families of pairwise disjoint atoms in $X$ and $Y$, respectively.
\begin{enumerate}
\item If $X$ is atomic and $T$ is order continuous, then $T\geq 0$ if and only if $Ta\geq 0$ for every $a\in \mathcal A$.  
\item If $Y$ is atomic, then $T\geq 0$ if and only if $\varphi_b\circ T\geq 0$ for every atom $b\in \mathcal B$.
\item If $X$ and $Y$ are atomic and $T$ is order continuous, then $T\geq 0$ if and only if $\varphi_b(Ta)\geq 0$ for all $a\in \mathcal A$
 and $b\in \mathcal B$.
\end{enumerate}
\end{lemma}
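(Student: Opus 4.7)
The plan is to treat the three parts in order, using the atomic representation formula \eqref{zapis z atomi} together with \Cref{pozfia je poz}. In each case the forward implication is immediate, so the work is in the converses.

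For part (i), the forward direction follows since atoms are positive. For the converse, assume $Ta\ge 0$ for every $a\in\mathcal A$ and fix a positive vector $x\in X$. By \Cref{pozfia je poz}, $\varphi_a(x)\ge 0$ for every $a\in\mathcal A$, so each finite sum $s_F:=\sum_{a\in F}\varphi_a(x)\,a$ indexed by $F\in\mathcal F$ is a nonnegative combination of atoms, and $Ts_F=\sum_{a\in F}\varphi_a(x)\,Ta\ge 0$. By \eqref{zapis z atomi} the net $(s_F)_{F\in\mathcal F}$ increases to $x$, hence converges to $x$ in order, and the order continuity of $T$ yields $Ts_F\goeso Tx$. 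Since order limits of positive nets are positive, $Tx\ge 0$; as $x\ge 0$ was arbitrary, $T\ge 0$.

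For part (ii), the forward direction uses that each coordinate functional $\varphi_b$ is positive. For the converse, suppose $\varphi_b\circ T\ge 0$ for every $b\in\mathcal B$ and fix a positive $x\in X$. Then $\varphi_b(Tx)\ge 0$ for every $b\in\mathcal B$, and applying \Cref{pozfia je poz} in the atomic vector lattice $Y$ we conclude $Tx\ge 0$. Part (iii) is then obtained by combining the two: apply (i) to reduce $T\ge 0$ to the condition $Ta\ge 0$ for every $a\in\mathcal A$, and then apply \Cref{pozfia je poz} in $Y$ to each $Ta$ to replace this by $\varphi_b(Ta)\ge 0$ for every $b\in\mathcal B$.

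There is no real obstacle here; the only point that deserves care is ensuring that the net $(s_F)_{F\in\mathcal F}$ in part (i) really is increasing so that one may invoke order continuity of $T$, which is where the positivity of the coefficients $\varphi_a(x)$ (via \Cref{pozfia je poz}) is used. Everything else is a direct bookkeeping exercise with the definitions recalled in \Cref{preliminaries}.
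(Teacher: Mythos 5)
Your proposal is correct and follows essentially the same route as the paper: reduce to the ``if'' directions, handle (i) by positivity on finite positive combinations of atoms plus order continuity, prove (ii) directly from \Cref{pozfia je poz} applied in $Y$, and obtain (iii) by combining the two. The only difference is that you spell out the order-continuity step in (i) (the increasing net $(s_F)_F$ from \eqref{zapis z atomi} and the order-closedness of the positive cone), which the paper leaves implicit.
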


\begin{proof}
Since coordinate functionals are positive, it suffices to prove the ``if" statements. 

(i) Suppose that $Ta\geq 0$ for every atom $a\in \mathcal A$. Then $Tx\geq 0$ for every $x$ that is a positive linear combination of atoms in $\mathcal A$. Since $X$ is atomic and $T$ is order continuous, we conclude that $T$ is positive. 

(ii) Assume that $\varphi_b\circ T\geq 0$ for every atom $b\in \mathcal B$ and pick any $x\in X^+$. Then $\varphi_b(Tx)\geq 0$ for every atom $b\in \mathcal B$, so that $Tx\geq 0$  by \Cref{pozfia je poz}.  

(iii) Suppose that $\varphi_b(Ta)\geq 0$ for all $a\in \mathcal A$ and $b\in \mathcal B$. By \Cref{pozfia je poz} we have $Ta\geq 0$ for every $a\in \mathcal A$. Now we apply (i). 
\end{proof}

The following proposition characterizes order continuous operators contained in $\mathscr A_0$.

\begin{proposition}\label{aabb}
The following statements are equivalent for an order continuous operator $T$ on an Archimedean vector lattice $X$.
\begin{enumerate}
  \item $T\in \mathscr A_0$.
  \item There exist only finitely many pairs $(a,b)\in \mathcal A\times \mathcal A$ such that
$$(a\otimes \varphi_a)T(b\otimes \varphi_b)\neq 0$$
and $T$ satisfies $TA\subseteq A$ and $T|_C=0$.
\item There exist only finitely many pairs $(a,b)\in \mathcal A\times \mathcal A$ such that
$\varphi_a(Tb)\neq 0$ and $T$ satisfies $TA\subseteq A$ and $T|_C=0$. 
\end{enumerate}
\end{proposition}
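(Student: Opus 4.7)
The plan is to prove (ii) $\Leftrightarrow$ (iii) by a short direct computation, (i) $\Rightarrow$ (iii) by inspecting the explicit form of elements of $\mathscr A_0$, and then handle the main direction (iii) $\Rightarrow$ (i) by constructing a candidate operator in $\mathscr A_0$ and matching it to $T$ through an order-continuity argument running along the order-dense ideal $A + C$.

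For (ii) $\Leftrightarrow$ (iii), the identity
$$(a \otimes \varphi_a)\, T\, (b \otimes \varphi_b) = \varphi_a(Tb)\,(a \otimes \varphi_b),$$
combined with $a \otimes \varphi_b \neq 0$, shows that the two nonvanishing conditions coincide; the side conditions $TA \subseteq A$ and $T|_C = 0$ are literally identical in the two statements. For (i) $\Rightarrow$ (iii), write $T = \sum_i \lambda_i (a_i \otimes \varphi_{b_i})$ using \eqref{A_0 def}: the range of each $a_i \otimes \varphi_{b_i}$ lies in $\mathbb R a_i \subseteq A$; each $\varphi_{b_i}$ annihilates $C \subseteq \{b_i\}^d$; and the identity $\varphi_a(a') = \delta_{a,a'}$ for $a, a' \in \mathcal A$, a consequence of pairwise disjointness, confines the pairs $(a,b)$ with $\varphi_a(Tb) \neq 0$ to a finite set.

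For the main implication (iii) $\Rightarrow$ (i), define
$$S = \sum_{(a,b)\,:\,\varphi_a(Tb) \neq 0} \varphi_a(Tb)\, (a \otimes \varphi_b) \in \mathscr A_0,$$
a finite sum by hypothesis; the goal is $T = S$. For any atom $c \in \mathcal A$, direct evaluation gives $Sc = \sum_a \varphi_a(Tc)\, a$, and since $Tc \in A$ with $\varphi_a(Tc - Sc) = 0$ for every $a \in \mathcal A$, \Cref{pozfia je poz} applied inside the atomic Archimedean vector lattice $A$ (atomic because any $0 < y \in A$ must satisfy $y \wedge a > 0$ for some $a \in \mathcal A$, lest $y \in A^d = C$ contradict $A \cap C = \{0\}$) forces $Tc = Sc$. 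Linearity extends this to $\Span \mathcal A$; order continuity of $T$ and $S$ (the latter by \Cref{order bounded operator functional} applied to each rank-one summand), combined with \eqref{zapis z atomi} applied within $A$, propagates the equality to all of $A$; and since $T|_C = S|_C = 0$, we obtain $T = S$ on the direct sum $A + C$.

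The main obstacle is the final passage, from $A + C$ to the whole of $X$: when $X$ is not atomic, $A + C$ may be a proper subspace. This is resolved by the order density of $A + C$ in $X$ recorded in the preliminaries: for each $x \geq 0$ in $X$ there is an increasing net $(y_\alpha)$ in $(A+C)^+$ with $y_\alpha \uparrow x$, and order continuity of the order-continuous operator $T - S$ transports the equality $(T - S)(y_\alpha) = 0$ to $(T - S)(x) = 0$. The decomposition $x = x^+ - x^-$ closes the argument for arbitrary $x \in X$.
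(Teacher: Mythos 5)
Your argument is correct and essentially coincides with the paper's proof: both directions hinge on the same candidate $S=\sum\varphi_a(Tb)\,(a\otimes\varphi_b)$, the coordinate identity $\varphi_a((T-S)c)=0$ for atoms, and the passage from $\Span\mathcal A$ to $A$ and then to all of $X$ via order continuity and order density of $A\oplus C$. Routing the equivalences through (iii) instead of (ii) and invoking \Cref{pozfia je poz} inside $A$ rather than \Cref{nicelni operator atomi}(iii) are only cosmetic differences.
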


\begin{proof}
    (i)$\Rightarrow$(ii) Pick $T\in \mathcal A_0$. Then $T=\sum_{c,d\in F} \lambda_{cd}(c\otimes \varphi_d) \in \mathcal A_0$ for some finite set $F\subseteq \mathcal A$ and some scalars $\lambda_{cd}$. Clearly, $T(\Span \mathcal A)\subseteq \Span \mathcal A$ and $T|_C=0$. Since $T$ is order continuous, we have $TA\subseteq A$. A direct calculation shows that 
    $$(a\otimes \varphi_a)T(b\otimes \varphi_b)=\sum_{c,d\in F} \lambda_{cd} \varphi_a(c)\varphi_d(b) (a\otimes \varphi_b)=0$$ for $(a,b)\notin F\times F$. 

    (ii)$\Rightarrow$(i) Assume now that there exists a finite set $G\subseteq \mathcal A\times \mathcal A$ such that $(a\otimes \varphi_a)T(b\otimes \varphi_b)=0$ for $(a,b)\notin G$ and $T$ satisfies $TA\subseteq A$ and $T|_C=0$.. Then $G\subseteq F\times F$ for some finite subset $F\subseteq \mathcal A$. Clearly, if $(a,b)\notin F\times F$, then $(a\otimes \varphi_a)T(b\otimes \varphi_b)=0$. It follows that $\varphi_a(Tb)=0$ for $(a,b)\notin F\times F$. We define the order continuous operator
    $$S=\sum_{c,d\in F}\varphi_c(Td)(c\otimes \varphi_d)\in \mathscr A_0$$
    and calculate
    $$\varphi_a((T-S)b)=\varphi_a(Tb)-\sum_{c,d\in F}\varphi_c(Td)\varphi_d(b)\varphi_a(c)$$
    for $a,b\in \mathcal A$. If $a\notin F$, then $\varphi_a(Tb)=0$ and $\varphi_a(c)=0$ for every $c\in F$ which clearly implies $\varphi_a((T-S)b)=0$. 
     On the other hand, if $a\in F$, then $\varphi_a(c)=0$ for every $c\in \mathcal A\setminus \{a\}$ yielding
    $$\varphi_a((T-S)b)=\varphi_a(Tb)-\sum_{d\in F}\varphi_a(Td)\varphi_d(b).$$
    If $b\notin F$, then every term in the sum above is zero, so that we have $\varphi_a(Tb)=\varphi_a(Sb)=0$. On the other hand, if $b\in F$, the sum above reduces to $\varphi_a(Tb)$. 
    Therefore, both cases give $\varphi_a((T-S)b)=0$ for all $a,b\in \mathcal A$.  \Cref{nicelni operator atomi}(iii)  implies $T|_A=S|_A$.  Since $T|_C=S|_C=0$, $S$ and $T$ agree on the order dense ideal $A\oplus C$ of $X$. To conclude the proof we once more use the fact that $T$ and $S$ are order continuous. 

    (ii)$\Leftrightarrow$(iii) follows from the identity
    $(a\otimes \varphi_a)T(b\otimes \varphi_b)=\varphi_a(Tb)(a\otimes\varphi_b)$. 
\end{proof}

If a vector lattice $X$ is finite-dimensional and Archimedean, it is atomic and order isomorphic to $\mathbb R^n$ ordered coordinatewise where $n=\dim X$. Hence, if $\{a_1,\ldots,a_n\}$ is a maximal family of pairwise disjoint atoms in $X$, then every linear operator $T$ on $X$ is of the form 
$$T=\sum_{i,j=1}^n(a_i\otimes \varphi_{a_i})T(a_j\otimes \varphi_{a_j}).$$
Suppose now that $X$ is an atomic Archimedean vector lattice and let $\mathcal A$ be a maximal family of pairwise disjoint atoms in $X$. For a finite subset $F\subseteq \mathcal A$ we define the \emph{finite truncation} $T_F$ of $T$ by 
$$T_F= \sum_{a,b\in F}(a\otimes \varphi_{a})T(b\otimes \varphi_{b}).$$
If we order the family $\mathcal F$ of all finite subsets of $\mathcal A$ by set inclusion, $\mathcal F$ becomes a directed set, so that we may consider $(T_F)_{F\in \mathcal F}$ as a net.  
The following lemma shows that every positive order continuous operator $T\in \mathscr L_n(X)$ on an atomic Archimedean vector lattice can be recovered from its finite truncations. 

\begin{lemma}\label{urejenostna gostost}
Let $X$ be an atomic Archimedean vector lattice. Then for each positive operator $T\in \mathscr L_n(X)$ the net
$(T_F)_{F\in \mathcal F}$ satisfies $T_F\nearrow T$ in $\mathscr L_n(X)$.
Moreover, for each $x\geq 0$ we have 
$T_Fx\nearrow Tx$.
\end{lemma}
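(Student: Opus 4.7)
For each $F \in \mathcal{F}$ the sum $P_F := \sum_{a \in F} a \otimes \varphi_a$ is a band projection onto the projection band $\Span F$, and $S_F := I - P_F$ is the complementary band projection; in particular $T_F = P_F T P_F$. The plan is to prove in turn: (a) $(T_F)_{F \in \mathcal{F}}$ is increasing and bounded above by $T$; (b) $T_F x \nearrow Tx$ in $X$ for every $x \ge 0$; and (c) $T_F \nearrow T$ in $\mathscr{L}_n(X)$.

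For (a), given $F \subseteq G$ I decompose $P_G = P_F + P_{G \setminus F}$ as a sum of disjoint band projections and expand $T_G = P_G T P_G$; the difference $T_G - T_F$ becomes a sum of three terms of the form $Q_1 T Q_2$ with $Q_i$ positive band projections, and hence is positive. The bound $T_F \le T$ follows from $P_F \le I$ together with positivity of $T$, applied twice: $T_F = P_F T P_F \le T P_F \le T$.

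Step (b) is the main work. By \eqref{zapis z atomi} we have $P_F x \nearrow x$ for every $x \ge 0$, equivalently $S_F x \searrow 0$. Splitting $Tx = T P_F x + T S_F x$ and $T P_F x = P_F T P_F x + S_F T P_F x = T_F x + S_F T P_F x$ yields the key decomposition
$$
Tx - T_F x = S_F T P_F x + T S_F x,
$$
in which both summands are positive. Order continuity of $T$ gives $T S_F x \searrow 0$, while $P_F x \le x$ combined with positivity of $S_F$ gives $S_F T P_F x \le S_F T x$, and a further application of \eqref{zapis z atomi} to $Tx \ge 0$ shows $S_F Tx \searrow 0$. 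The decreasing positive net $(Tx - T_F x)_{F \in \mathcal{F}}$ is therefore dominated by a net decreasing to $0$, so $\inf_F(Tx - T_F x) = 0$ and $T_F x \nearrow Tx$.

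For (c), $T$ is an upper bound of $\{T_F\}$ in $\mathscr{L}_n(X)$ by (a). If $S \in \mathscr{L}_n(X)$ is any other upper bound, then $Sx \ge T_F x$ for every $F$ and every $x \ge 0$, and passing to the supremum in $F$ via (b) gives $Sx \ge Tx$. Hence $S \ge T$, so $T = \sup_F T_F$ in $\mathscr{L}_n(X)$. The only nontrivial ingredient is the decomposition identity in step (b); once it is in hand, the argument reduces to routine applications of order continuity of $T$ and of the fundamental formula \eqref{zapis z atomi}.
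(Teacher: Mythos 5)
Your proof is correct, and it takes a route that differs from the paper's in a meaningful way. The paper establishes $0\le T_{F_1}\le T_{F_2}\le T$ just as you do, but verifies the supremum property only by testing an arbitrary upper bound $S$ on atoms: for $c\in F$ one has $T_Fc=\sum_{a\in F}\varphi_a(Tc)a$, which increases to $Tc$ by \eqref{zapis z atomi}, giving $Sc\ge Tc$ for every atom $c$; the conclusion $S\ge T$ then requires invoking the positivity criterion \Cref{nicelni operator atomi}(i) for the order continuous operator $S-T$. You instead prove the pointwise statement $T_Fx\nearrow Tx$ for \emph{every} positive $x$ first, via the decomposition $Tx-T_Fx=S_FTP_Fx+TS_Fx$ (which checks out: both terms are positive, $TS_Fx\searrow 0$ by order continuity of $T$, and $S_FTP_Fx\le S_FTx\searrow 0$ by \eqref{zapis z atomi} applied to $Tx$), and then the supremum in $\mathscr L_n(X)$ follows immediately, with no appeal to \Cref{nicelni operator atomi}. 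Your approach buys two things: it actually supplies a proof of the ``moreover'' clause of the lemma, which the paper's argument establishes only for atoms and leaves implicit for general $x\ge 0$, and it localizes the use of order continuity entirely to the operator $T$ rather than also to the competing upper bound $S$. The paper's approach is shorter because it delegates the passage from atoms to all of $X$ to the already-proved \Cref{nicelni operator atomi}. One small point worth making explicit in your write-up: the identification of $P_F=\sum_{a\in F}a\otimes\varphi_a$ as the band projection onto $\Span F$ rests on the fact (recorded in the preliminaries via \cite[Theorem 26.4]{LZ71}) that each $\mathbb Ra$ is a projection band and that finitely many pairwise disjoint projection bands sum to a projection band; this is exactly the inequality $0\le P_F\le I$ that the paper also uses.
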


\begin{proof}
Pick any finite subset $F\in \mathcal F$ and observe that $0\leq \sum_{a\in F}a\otimes \varphi_a\leq I$ yields
$0\leq T_F\leq T$. 
Clearly, positivity of $T$ implies $0\leq T_{F_1}\leq T_{F_2}$ for each pair $F_1, F_2\in \mathcal F$ with $F_1\subseteq F_2$. To prove that $T$ is the supremum of $(T_F)_{F\in \mathcal F}$ in $\mathscr L_n(X)$, let $S\in \mathscr L_n(X)$ be any upper bound of $(T_F)_{F\in \mathcal F}$. Choose any $c\in \mathcal A$, and select $F\in \mathcal F$ such that $c\in F$. By assumption, we have
$S\geq T_F\geq 0$, and so
$$Sc\geq \sum_{a,b\in F}(a\otimes \varphi_a)T(b\otimes \varphi_b)c=\sum_{a,b\in F}\varphi_b(c)\varphi_a(Tb)a=\sum_{a\in F}\varphi_a(Tc)a.$$
Since
$$Tc=\sup_{F\in \mathcal F}\sum_{a\in F}\varphi_a(Tc)a$$
by \eqref{zapis z atomi}, 
we have $Sc\geq Tc$. To conclude the proof, we apply \Cref{nicelni operator atomi}(i) for the operator $S-T$. 
\end{proof}

\begin{proposition}\label{A_0 mreza}
    For an Archimedean vector lattice $X$ the following statements hold.
    \begin{enumerate}
        \item The algebra $\mathscr A_0$ is a vector lattice. For $T=\sum_{a,b\in F}\lambda_{ab}(a\otimes \varphi_b)\in \mathscr A_0$ the modulus is given by 
        $$|T|=\sum_{a,b\in F}|\lambda_{ab}|(a\otimes \varphi_b).$$
        \item If $X$ is Dedekind complete, then $\mathscr A_0$ is a vector sublattice of $\mathscr L_n(X)$.
    \end{enumerate}
\end{proposition}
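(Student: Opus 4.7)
The plan is to isolate an elementary positivity lemma for linear combinations of operators of the form $a\otimes \varphi_b$ and then use it to identify $S := \sum_{a,b\in F}|\lambda_{ab}|(a\otimes \varphi_b)$ as the least upper bound of $T$ and $-T$ in $\mathscr A_0$. The key observation is the following: for a finite $G\subseteq \mathcal A$ and scalars $\{\nu_{ab}\}_{a,b\in G}$, the operator $R=\sum_{a,b\in G}\nu_{ab}(a\otimes\varphi_b)$ is positive in $\mathscr L(X)$ if and only if $\nu_{ab}\ge 0$ for every $a,b\in G$. One direction is immediate since each $a\otimes\varphi_b$ is the tensor product of a positive vector and a positive lattice homomorphism. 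For the converse, evaluate $R$ at an atom $c\in G$, obtaining $Rc=\sum_{a\in G}\nu_{ac}\,a$ via the identities $\varphi_b(c)=\delta_{bc}$ for $b,c\in\mathcal A$, and then extract each coefficient as $\nu_{a_0 c}=\varphi_{a_0}(Rc)\ge 0$ by positivity of $\varphi_{a_0}$.

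Given this lemma, (i) follows quickly. The inequalities $S\pm T\ge 0$ are immediate from $|\lambda_{ab}|\pm \lambda_{ab}\ge 0$. To see that $S$ is the least upper bound in $\mathscr A_0$, take any $R\in \mathscr A_0$ with $R\ge \pm T$; after enlarging to a common finite index set $G\supseteq F$, the lemma applied to $R\pm T$ forces the coefficients $\mu_{ab}$ of $R$ to satisfy $\mu_{ab}\ge \pm\lambda_{ab}$, hence $\mu_{ab}\ge |\lambda_{ab}|$, and a second application of the lemma to $R-S$ yields $R\ge S$. Thus $|T|$ exists in $\mathscr A_0$ and has the claimed form, and the standard identity $T\vee U = T + \tfrac12(|U-T|+(U-T))$ promotes $\mathscr A_0$ to a vector lattice.

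For (ii), assume $X$ is Dedekind complete, so that $\mathscr L_n(X)$ is a vector lattice. The inequality $|T|_{\mathscr L_n(X)}\le S$ is trivial since $S\in\mathscr A_0\subseteq\mathscr L_n(X)$ already dominates $\pm T$. For the reverse, take any $U\in\mathscr L_n(X)$ with $U\ge\pm T$; then for every atom $c\in\mathcal A$ one has $Uc\ge Tc$ and $Uc\ge -Tc$, hence $Uc\ge |Tc|$ in $X$, and a short computation exploiting the pairwise disjointness of the atoms in $F$ gives $|Tc|=Sc$. Thus $U-S$ is positive on $\Span\mathcal A$, and on $C$ both $Sx=0$ and $Ux\ge Tx=0$ hold by \Cref{aabb}, so $U-S$ is positive on the order dense ideal $\Span\mathcal A\oplus C$. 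The main obstacle is this final extension step: propagating positivity of $U-S$ from $\Span\mathcal A\oplus C$ to all of $X^+$. The cleanest route is to approximate an arbitrary $x\in X^+$ by an upward-directed net drawn from this ideal, using the order density of $A+C$ in $X$ and of $\Span\mathcal A$ in $A$, and then invoke order continuity of $U-S$ together with the closure of the positive cone under order limits to conclude $U\ge S$, so that $|T|_{\mathscr L_n(X)}=S$.
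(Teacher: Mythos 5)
Your argument is correct. For part (i) you take a genuinely different and more elementary route than the paper: your coefficient-positivity lemma ($\sum_{a,b\in G}\nu_{ab}(a\otimes\varphi_b)\ge 0$ if and only if all $\nu_{ab}\ge 0$, proved by evaluating at atoms and applying coordinate functionals via $\varphi_b(c)=\delta_{bc}$) reduces the computation of $|T|$ in $\mathscr A_0$ to arithmetic on coefficients; this stays entirely inside $\mathscr A_0$ and needs neither order continuity nor order density of $A\oplus C$. The paper instead proves in one stroke that $S=\sum_{a,b\in F}|\lambda_{ab}|(a\otimes\varphi_b)$ is the supremum of $\{T,-T\}$ taken in all of $\mathscr L_n(X)$, by comparing an arbitrary order continuous upper bound with $S$ on atoms and on $C$ and then extending over the order dense ideal $A\oplus C$ by order continuity; this yields (i) and (ii) simultaneously. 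Your part (ii) is essentially that same argument (checking $Uc\ge |Tc|=Sc$ on atoms via disjointness of the atoms in $F$, handling $C$, and propagating positivity of $U-S$ from the order dense ideal $\Span\mathcal A\oplus C$ to $X$ using order continuity and order closedness of the positive cone), so you pay for a cleaner, purely algebraic (i) with a small amount of duplication in (ii). The individual steps all check out, including the implicit linear independence of the operators $a\otimes\varphi_b$ (which your extraction $\nu_{a_0c}=\varphi_{a_0}(Rc)$ establishes as a byproduct) and the fact that $\Span\mathcal A\oplus C$ is indeed an order dense ideal of $X$.
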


\begin{proof}
We will simultaneously prove (i) and (ii).  To this end, pick any $T\in \mathscr A_0$ and any maximal set $\mathcal A$ of pairwise disjoint atoms in $X$. Then there exists a finite subset $F\subseteq \mathcal A$ such that 
$$T=\sum_{a,b\in  F} \lambda_{ab}(a\otimes \varphi_b).$$ We claim that the operator 
$$S=\sum_{a,b\in F} |\lambda_{ab}|(a\otimes \varphi_b)$$
is the modulus of $T$ in $\mathscr A_0$. By definition, $S$ belongs to $\mathscr A_0$. Clearly, for every atom $c\in X$ we have $Sc\geq Tc,-Tc$, so that \Cref{nicelni operator atomi} yields $S|_A\geq T|_A,-T|_A$. Furthermore,  $S|_C=T|_C=0$ implies 
$S|_{A\oplus C}\geq T|_{A\oplus C},-T|_{A\oplus C}$. Therefore, restrictions of order continuous operators $S-T$ and $S+T$ to the order dense ideal $A\oplus C$ are positive which implies $S-T$ and $S+T$ are positive on $X$. This yields $S\geq T,-T$. 

Let $\widetilde S\in \mathscr L_n(X)$ be any upper bound for $\{T,-T\}$ and observe that for every $c\in \mathcal A\setminus F$ we clearly have $\widetilde Sc\geq Tc=Sc=0$. On the other hand, for $c\in  F$ we have
$$\widetilde Sc\geq \sum_{a\in F}\lambda_{ac}a \qquad \textrm{and} \qquad \widetilde Sc\geq -\sum_{a\in \mathcal F}\lambda_{ac}a,$$ which implies 
$$\widetilde Sc\geq \bigg| \sum_{a\in  F}\lambda_{ac}a \bigg| = \sum_{a\in \mathcal F}|\lambda_{ac}|a=Sc.$$
By applying \Cref{nicelni operator atomi} once again we obtain $\widetilde S|_A \geq S|_A$. Since $S|_C=0$, we also have $\widetilde S|_C\geq S|_C=0$. Similarly as before, order continuity of $S$ and $\widetilde S$ and order density of $A\oplus C$ in $X$ yield $\widetilde S\geq S$. 
This finally proves that $S$ is the supremum of the set $\{T,-T\}$ in both $\mathscr A_0$ and $\mathscr L_n(X)$. To finish the proof, note that \cite[Theorem 1.56]{Aliprantis:06} yields that $\mathscr L_n(X)$ is a vector lattice whenever $X$ is Dedekind complete. 
\end{proof}
 
We conclude this section with the following operator theoretical characterization of atomic Archimedean vector lattices. 

\begin{corollary}
   A nonzero Archimedean vector lattice $X$ is atomic if and only if for every nonzero positive operator $T\in \mathscr L_n(X)$ there exists a positive nonzero operator $S\in \mathscr A_0$ such that $0<S\leq T$.  
\end{corollary}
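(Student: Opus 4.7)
The forward implication will follow immediately from \Cref{urejenostna gostost}. Namely, if $X$ is atomic and $0 \ne T \in \mathscr L_n(X)^+$, then the increasing net of finite truncations $(T_F)_{F \in \mathcal F} \subseteq \mathscr A_0^+$ satisfies $T_F \nearrow T$; since the supremum is nonzero, at least one $T_F$ must itself be nonzero, giving the required $0 < S := T_F \le T$ with $S \in \mathscr A_0$.

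For the converse I argue contrapositively: assume $X$ is not atomic and produce a nonzero positive $T \in \mathscr L_n(X)$ admitting no nonzero $S \in \mathscr A_0$ beneath it. If $X$ has no atoms at all, then $\mathscr A_0 = \{0\}$ by definition and any nonzero positive $T \in \mathscr L_n(X)$ (for instance $T = I$) trivially works. The substantive case is when $X$ has atoms but the atomic part $A$ is proper in $X$; then $C = A^d$ is a nonzero band. I will take
\[
T := c \otimes \varphi_a,
\]
where $a$ is a fixed element of the maximal family $\mathcal A$ defining $\mathscr A_0$ and $c$ is any strictly positive vector in $C$. The order continuity of $\varphi_a$ together with \Cref{order bounded operator functional}(ii) gives $T \in \mathscr L_n(X)^+$, and $Ta = c \ne 0$ shows $T \ne 0$.

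The heart of the argument is to verify that no nonzero $S \in \mathscr A_0$ sits below this particular $T$. Writing $S = \sum_{a',b \in F} \lambda_{a'b}\, a' \otimes \varphi_b$ and using that $Tb = \varphi_a(b)\, c$ vanishes on every atom $b \ne a$, the inequality $0 \le S \le T$ forces $Sb = 0$ for all such $b$; the linear independence of the atoms in $F$ then collapses $S$ down to its $b = a$ column, i.e.\ $S = u \otimes \varphi_a$ with $u = \sum_{a' \in F}\lambda_{a'a}\,a' \in A$. Evaluating $S \le T$ at $a$ yields $0 \le u \le c$ with $u \in A$ and $c \in C = A^d$, and disjointness of $A$ and $C$ gives $u = u \wedge c = 0$, a contradiction. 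The only real delicacy is the choice of $T$: it must be order continuous, nonzero, and arranged so that the rigid structure of operators in $\mathscr A_0$ (whose ranges land in $A$) clashes with the content of $T$ (whose range touches $C$); the rank one operator $c \otimes \varphi_a$ is the simplest device achieving all of this simultaneously.
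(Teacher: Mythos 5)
Your proof is correct and follows essentially the same route as the paper: the forward direction via the finite truncations of \Cref{urejenostna gostost}, and the converse via the rank-one test operator built from a nonzero positive vector of the continuous part $C$ and the coordinate functional of an atom (the paper writes it as $x\otimes\varphi_c$ with $x\perp$ all atoms and derives the contradiction by evaluating at a $y$ with $Sy\neq 0$, while you organize it contrapositively and kill $S$ column by column, but the key object and the disjointness clash are identical). The only cosmetic point is that ``strictly positive'' should read ``nonzero positive'' for the vector $c\in C$.
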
 

\begin{proof}
    If $X$ is atomic, then every positive order continuous operator $T$ on $S$ is by \Cref{urejenostna gostost} the supremum of the increasing net $(T_F)_{F\in \mathcal F}$. 
    
    To prove the converse statement,  observe first that since the identity operator on $X$ is order continuous, the algebra $\mathscr A_0$ is, by assumption, nontrivial, yielding that the vector lattice $X$ contains atoms. Assume that $X$ is not atomic.    
    Then there exists a nonzero positive vector $x$ which is disjoint with every atom in $X$.  For an atom $c$ in $X$ we consider the rank-one operator $x\otimes \varphi_c$. By \Cref{order bounded operator functional}, the operator $x\otimes \varphi_c$ is order continuous. By assumption, there exists a nonzero positive operator $S\in \mathscr A_0$ such that  $0<S\leq x\otimes \varphi_c$. There exists a finite subset $F\subseteq \mathcal A$ and scalars $\lambda_{ab}$ such that 
    $$S=\sum_{a,b\in F}\lambda_{ab}(a\otimes \varphi_b).$$  Pick any positive vector  $y\in X$ such that $Sy\neq 0$. Then 
    $$0< \sum_{a,b\in F}\lambda_{ab}\varphi_b(y)a\leq \varphi_c(y)x$$ shows that $x$ is not disjoint with some atom which proves that the span of $\mathcal A$ is order dense in $X$.  
\end{proof}

In particular, if $X$ is Dedekind complete, then $X$ is atomic if and only if $\mathscr A_0$ is an order dense sublattice of the vector lattice $\mathscr L_n(X)$.

\section{Automorphisms of algebras of operators on atomic vector lattices}\label{automorphisms of algebras}
 
Throughout this section, we assume that $\mathcal A$ is a maximal family of pairwise disjoint atoms in an atomic Archimedean vector lattice $X$. Let $\mathscr A$ be an algebra in $\mathscr L(X)$ containing $\mathscr A_0$ (see \Cref{operators_atomic_VL}) and let $\Phi\colon \mathscr A\to \mathscr A$ be a positive algebra automorphism.  In \Cref{main theorem}, we prove that every such positive automorphism $\Phi$ satisfies $\Phi(\mathscr A_0)=\mathscr A_0$ whenever $\mathscr A\subseteq \mathscr L_n(X)$. Furthermore, if $\mathscr A\subseteq \mathscr L_n(X)$ and $X$ is Dedekind complete, in \Cref{reprezentacija avtomorfizma} we conclude that for each positive operator $T$ the restriction of $\Phi(T)$ to the ideal  generated by atoms behaves as a ``generalized permutation" operator. This result will be extremely useful in \Cref{section c00} where we consider algebras of operators on vector lattices of the form $c_{00}(\Lambda)$ for some nonempty set $\Lambda$. In that case, we prove that every positive automorphism $\Phi\colon \mathscr A\to \mathscr A$ is of the form 
$\Phi(T)=PDTD^{-1}P^{-1}$ for some ``diagonal" operator $D$ and some ``permutation" operator $P$.

We start with the following characterization of rank-one operators. 

\begin{lemma}\label{dim=1 rkT=1}
Let $T$ be an order continuous operator on an atomic Archimedean vector lattice $X$ and let $\mathscr A$ be any algebra containing $\mathscr A_0$.
Then $T$ is a rank-one operator if and only if $\dim(T\mathscr A T)=1$. 
\end{lemma}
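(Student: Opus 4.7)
The plan is to prove both implications, with the nontrivial direction depending crucially on \Cref{n-dimensional order closed}.

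For the forward direction, I assume $T=u\otimes\psi$ with $u\in X\setminus\{0\}$ and $\psi\in X'\setminus\{0\}$. A direct calculation gives $TST=\psi(Su)\cdot(u\otimes\psi)$ for every $S\in\mathscr A$, so $T\mathscr A T\subseteq\mathbb R T$ has dimension at most one. To see that the dimension is exactly one, I look for $S\in\mathscr A$ with $\psi(Su)\neq 0$. Since $T$ is order continuous and $u\neq 0$, \Cref{order bounded operator functional}(ii) forces $\psi$ to be order continuous. If $\psi$ vanished on every atom in $\mathcal A$, then for $x\geq 0$ the atomic representation \eqref{zapis z atomi} together with order continuity of $\psi$ would yield $\psi(x)=0$, contradicting $\psi\neq 0$; hence $\psi(a)\neq 0$ for some $a\in\mathcal A$. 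Likewise, \Cref{pozfia je poz} applied to $u$ produces $b\in\mathcal A$ with $\varphi_b(u)\neq 0$. The operator $S:=a\otimes\varphi_b\in\mathscr A_0\subseteq\mathscr A$ then satisfies $\psi(Su)=\varphi_b(u)\psi(a)\neq 0$.

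For the converse, suppose $\dim(T\mathscr A T)=1$, so in particular $T\neq 0$. For any atoms $a,b\in\mathcal A$, a direct computation gives
\[T(a\otimes\varphi_b)T=(Ta)\otimes(\varphi_b\circ T)\in T\mathscr A_0 T\subseteq T\mathscr A T.\]
Order continuity of $T$ combined with \eqref{zapis z atomi} prevents $Ta=0$ for every atom: otherwise $T$ would vanish on $\Span\mathcal A$, hence on every positive vector, and therefore on all of $X$. So there is $a_0\in\mathcal A$ with $u:=Ta_0\neq 0$. Similarly, \Cref{pozfia je poz} applied to each $Tx$ furnishes $b_0\in\mathcal A$ with $\eta:=\varphi_{b_0}\circ T\neq 0$. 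Then $u\otimes\eta=T(a_0\otimes\varphi_{b_0})T\neq 0$ spans the one-dimensional space $T\mathscr A T$. Varying $a$ while keeping $b=b_0$, each $(Ta)\otimes\eta$ must be a scalar multiple of $u\otimes\eta$, and since $\eta\neq 0$ this forces the existence of scalars $\lambda_a$ with $Ta=\lambda_a u$ for every $a\in\mathcal A$.

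The main obstacle is now to promote this identity from atoms to arbitrary $x\in X$, and this is exactly where the tool of \Cref{Section: order closedness} enters. For $x\geq 0$, the representation \eqref{zapis z atomi} gives
\[\sum_{a\in F}\varphi_a(x)a \nearrow x\]
in order along the directed family $\mathcal F$ of finite subsets of $\mathcal A$. Order continuity of $T$ then yields that $Tx$ is the order limit of $\sum_{a\in F}\varphi_a(x)\lambda_a u$, all of which lie in the one-dimensional subspace $\mathbb R u\subseteq X$. By \Cref{n-dimensional order closed} this subspace is order closed, so $Tx\in\mathbb R u$. Splitting $x=x^+-x^-$ extends this to every $x\in X$. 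Defining $\psi(x)$ as the unique scalar with $Tx=\psi(x)u$ produces a linear functional on $X$ with $T=u\otimes\psi$, and hence $T$ has rank one.
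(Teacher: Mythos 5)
Your proof is correct and follows essentially the same route as the paper's: the forward direction is the identical computation $TST=\psi(Su)(u\otimes\psi)$ with the same choice of witness $a\otimes\varphi_b\in\mathscr A_0$, and the converse rests on the same two pillars, namely that $T(a\otimes\varphi_b)T=(Ta)\otimes(\varphi_b\circ T)$ forces all $Ta$ into a single line $\mathbb Ru$, and that order closedness of $\mathbb Ru$ (\Cref{n-dimensional order closed}) together with order continuity of $T$ and \eqref{zapis z atomi} upgrades this to $T=u\otimes\psi$. The only cosmetic difference is that you argue the converse directly while the paper phrases it contrapositively (rank $\geq 2$ yields two independent elements of $T\mathscr AT$), but the underlying argument is the same.
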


\begin{proof}
Suppose first that the rank of $T$ is one. By \Cref{order bounded operator functional} we have $T=u\otimes \varphi$ for some nonzero vector $u\in X$ and nonzero functional $\varphi\in X_n^\sim$. Then for any $A\in \mathscr A$ we have
$$TAT=(u\otimes \varphi)A(u\otimes \varphi)=\varphi(Au)(u\otimes \varphi),$$
which shows $\dim(T\mathscr A T)\leq 1$.  Since $u\neq 0$, there exists an atom $b\in \mathcal A$ such that $\varphi_b(u)\neq 0$. Furthermore, $X$ being atomic and $\varphi$ nonzero and order continuous yield the existence of an atom $a\in \mathcal A$ such that $\varphi(a)\neq 0$. Therefore, 
$T(a\otimes \varphi_b)T=\varphi(a)\varphi_b(u)(u\otimes\varphi)\neq 0$ which proves that $\dim(T\mathscr A T)=1$.

For the opposite implication, assume that the rank of $T$ is at least two. We claim that there exist atoms $a, b\in\mathcal A$ such that $Ta$ and $Tb$ are linearly independent. If this were not the case, there would exist a nonzero vector $v\in X$ such that for each $a\in \mathcal A$ we have $Ta=\lambda_a v$ for some scalar $\lambda_a$. If $v=0$, then \Cref{nicelni operator atomi} yields $T=0$, which is impossible.

By linearity of $T$ it follows that for every $x\in \Span \mathcal A$ we have $Tx =\lambda_x v$ for some scalar $\lambda_x$. Since $X=X^+-X^+$ it suffices to prove that for every $x\in X^+$ we have $Tx=\lambda_xv$ for some scalar $\lambda_x$. To this end, since $\Span\mathcal A$ is order dense in $X$, there exists an increasing net $(x_\alpha)_\alpha$ in $\Span \mathcal A$ of positive vectors such that $0\leq x_\alpha\nearrow x$. Order continuity of $T$ yields $Tx_\alpha\goeso Tx$ and so $\lambda_{x_\alpha}v \goeso Tx$. Since the one-dimensional vector space $\mathbb Rv$ is order closed by \Cref{n-dimensional order closed}, we have $Tx=\lambda v$ for some scalar $\lambda$. This shows that $T$ is a rank-one operator which, by assumption, is impossible.  Hence,
there exist atoms $a,b\in \mathcal A$ such that $Ta$ and $Tb$ are linearly independent. 

For an arbitrary atom $e\in \mathcal A$ we define the rank one operators $X_1:=a\otimes \varphi_e$ and $X_2=b\otimes \varphi_e$. We claim that $TX_1T=Ta\otimes (\varphi_e\circ T)$ and $TX_2T=Tb\otimes (\varphi_e\circ T)$ are linearly independent for some $e\in \mathcal A$. To see this, suppose that
$$0 = \alpha (Ta\otimes (\varphi_e\circ T)) + \beta (Tb\otimes (\varphi_e\circ T)) = (\alpha Ta+\beta Tb)\otimes (\varphi_e\circ T) $$
for some scalars $\alpha$ and $\beta$. If $\alpha\neq 0$ or $\beta\neq 0$, then $\varphi_e\circ T=0$ since $Ta$ and $Tb$ are linearly independent.
By \Cref{nicelni operator atomi}, we have $T=0$, which is impossible. Hence, $TX_1T$ and $TX_2T$ are linearly independent elements in $T\mathscr AT$.
\end{proof}

The following example shows that the algebra $\mathscr A_0$ can be a proper subalgebra of $\mathscr L_n(X)$. 

\begin{example}
    Consider the vector lattice $c_{00}$ of all eventually null sequences. On $c_{00}$ we define the functional $\varphi$ by $\varphi(x)=\sum_{n=1}^\infty x_n$. Then $\varphi$ is a well-defined positive functional on $c_{00}$. Choose any net $(x_\alpha)_\alpha$ in $c_{00}$ such that $x_\alpha\searrow 0$ and pick an index $\alpha_0$. Then for all $\alpha\geq \alpha_0$ we have $0\leq x_\alpha \leq x_{\alpha_0}$ so that the net $(x_\alpha)_{\alpha\geq \alpha_0}$ is supported only on finitely many coordinates.  Since order convergence on $c_{00}$ is coordinatewise, $\varphi$ is order continuous. Clearly, for any $u\neq 0$ the operator $u\otimes \varphi$ is not contained in $\mathscr A_0$. 
\end{example}

In \Cref{c00 - operator theoretical} which is needed for the description of positive algebra automorphisms of $\mathscr L(c_{00}(\Lambda))$ we are going to prove that every linear functional on $c_{00}$ is order continuous. See also the subsequent corollary. 

Let $\mathscr A$ be a subalgebra in $\mathscr L(X)$ which contains $\mathscr A_0$. For a given positive algebra automorphism $\Phi\colon \mathscr A\to \mathscr A$ and $a,b\in \mathcal A$ we define $F_{ab}:=\Phi(a\otimes \varphi_b)$.

\begin{lemma}\label{lema o delti}
If $\Phi(\mathscr A_0)\subseteq \mathscr L_n(X)$, then the following assertions hold.
\begin{enumerate}
\item For all $a,b\in \mathcal A$ the operator $F_{ab}$ is a rank-one operator.
\item For all $a,b\in \mathcal A$ there exist positive scalars $\gamma_a$, a positive nonzero vector $u_a\in X$ and a positive nonzero order continuous functional $\psi_b$ such that
    $$F_{ab}=\tfrac{\gamma_a}{\gamma_b} (u_a\otimes \psi_b)$$
    with $\psi_a(u_a)=1$ and $\psi_a(u_b)=0$ for $b\neq a$.
\item For distinct atoms $a,b\in \mathcal A$ and an atom $e$ we have $\psi_b(e)=0$ whenever $u_a\wedge e >0$.
\end{enumerate}
\end{lemma}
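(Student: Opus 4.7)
The key structural observation is that the elements $e_{ab}:=a\otimes \varphi_b$ form a system of matrix units in $\mathscr A_0$, satisfying $e_{ab}e_{cd}=\delta_{bc}e_{ad}$, and this multiplication table is transported by $\Phi$ to the family $\{F_{ab}\}$. All three conclusions can be extracted by systematically unpacking these matrix-unit relations, together with order continuity (to invoke the rank-one criterion \Cref{dim=1 rkT=1}) and positivity (to pin down signs).

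For (i), I would apply \Cref{dim=1 rkT=1}. A direct calculation yields $e_{ab}\mathscr A e_{ab}=\mathbb R e_{ab}$: the inclusion $\subseteq$ follows from $e_{ab}Te_{ab}=\varphi_b(Ta)e_{ab}$ for every $T\in \mathscr A$, and nonvanishing is witnessed by $T=e_{ba}\in \mathscr A_0\subseteq \mathscr A$. Since $\Phi$ is a bijection of $\mathscr A$, the same identity transfers to $F_{ab}\mathscr A F_{ab}$, and order continuity of $F_{ab}$ (guaranteed by the hypothesis $\Phi(\mathscr A_0)\subseteq \mathscr L_n(X)$) lets the lemma conclude that $F_{ab}$ has rank one.

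For (ii), \Cref{order bounded operator functional} combined with (i) yields $F_{ab}=v_{ab}\otimes \rho_{ab}$ for some $v_{ab}\in X$ and $\rho_{ab}\in X_n^\sim$. Writing $F_{aa}=u_a\otimes \psi_a$, the idempotency $F_{aa}^2=F_{aa}$ forces the normalization $\psi_a(u_a)=1$; the orthogonality $F_{aa}F_{bb}=0$ for $a\neq b$ yields $\psi_a(u_b)=0$; the left-right absorption $F_{aa}F_{ab}=F_{ab}=F_{ab}F_{bb}$, combined with uniqueness of the rank-one decomposition up to scaling, forces $F_{ab}=\mu_{ab}(u_a\otimes \psi_b)$ for some $\mu_{ab}\in \mathbb R$; and the composition $F_{ab}F_{bc}=F_{ac}$ produces the cocycle $\mu_{ab}\mu_{bc}=\mu_{ac}$ (in particular $\mu_{aa}=1$). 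Fixing $a_0\in \mathcal A$ and setting $\gamma_a:=\mu_{aa_0}$, the cocycle rewrites as $\mu_{ab}=\gamma_a/\gamma_b$. Finally, positivity of $\Phi$ makes each $F_{ab}\geq 0$; the observation that a nonzero positive rank-one operator $v\otimes \rho$ must decompose into factors of matching sign (otherwise $\rho$ would take both signs on $X^+$ and force $v=0$) allows me to normalize $u_a\geq 0$ and $\psi_a\geq 0$, and nonvanishing of $F_{ab}$ then yields $\mu_{ab}>0$, i.e.\ $\gamma_a>0$ for every $a\in \mathcal A$.

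Conclusion (iii) is a short consequence: from $\psi_b(u_a)=0$ together with $\psi_b\geq 0$ and $u_a\geq 0$ established in (ii), positivity of $\psi_b$ makes it vanish on the whole order interval $[0,u_a]$; for an atom $e$ with $u_a\wedge e>0$, the Archimedean identity $B_e=\mathbb R e$ ensures that $u_a\wedge e$ is a positive multiple of $e$ lying in $[0,u_a]$, so $\psi_b(e)=0$. I expect the main technical obstacle to be the positivity bookkeeping in (ii): the purely algebraic matrix-unit relations would only determine the $\mu_{ab}$ up to global sign conventions on each atom, and converting this into the clean asymmetric form $\gamma_a/\gamma_b$ with all $\gamma_a>0$ depends crucially on the rigidity of nonzero positive rank-one operators together with positivity of $\Phi$.
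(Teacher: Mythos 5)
Your proposal is correct and follows essentially the same route as the paper: apply \Cref{dim=1 rkT=1} to get rank one, transport the matrix-unit relations of the operators $a\otimes\varphi_b$ through $\Phi$ to pin down $F_{ab}$ up to scalars, and use positivity plus atomicity for the sign normalization and part (iii). The only cosmetic differences are that you use the absorption identities $F_{aa}F_{ab}=F_{ab}=F_{ab}F_{bb}$ where the paper factors through $F_{ab}F_{ba}=F_{aa}$, and you replace the cited modulus formula $|x\otimes f|=|x|\otimes|f|$ by an elementary sign-matching argument for positive rank-one operators, both of which are sound.
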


\begin{proof}
(i) By \Cref{dim=1 rkT=1}, for $a,b\in \mathcal A$ the dimension of the vector space $(a\otimes \varphi_b)\mathscr A(a\otimes \varphi_b)$ is one. Since $\Phi$ is an automorphism of $\mathscr A$, the dimension of
$$\Phi\big((a\otimes \varphi_b)\mathscr A(a\otimes \varphi_b)\big)=F_{ab}\mathscr AF_{ab}$$
is also one. By \Cref{dim=1 rkT=1} and order continuity of $F_{ab}$ we conlude that $F_{ab}$ is also a rank-one operator.

(ii) By (i), the operator $F_{ab}$ is a rank-one operator. Therefore, $F_{ab}=x_{ab}\otimes f_{ab}$ for some nonzero vector $x_{ab}\in X$ and nonzero functional $f_{ab}$. Since $F_{ab}$ is order continuous, $f_{ab}$ is order continuous by  \Cref{order bounded operator functional}. 
Moreover, $f_{ab}$ is order bounded by \cite[Theorem 2.1]{AS05}. Positivity of $\Phi$ and \cite[Theorem 1.72]{Aliprantis:06} yield
$$F_{ab}=|F_{ab}|=|x_{ab}\otimes f_{ab}|=|x_{ab}|\otimes |f_{ab}|.$$ Therefore, we may assume that $x_{ab}$ and $f_{ab}$ are positive.
For an atom $a\in \mathcal A$ we define $u_a:=x_{aa}$ and $\psi_a=f_{aa}$.
Now we chose arbitrary atoms $a$ and $b$ from $\mathcal A$.
Since $(a\otimes \varphi_b)(b\otimes \varphi_a)=a\otimes \varphi_a$, we have
$$F_{ab}F_{ba}=\Phi(a\otimes \varphi_b)\Phi(b\otimes \varphi_a)=\Phi\big((a\otimes \varphi_b)(b\otimes \varphi_a)\big))=\Phi(a\otimes \varphi_a)=F_{aa}.$$
In particular, we get $$\psi_a(u_a)(u_a\otimes \psi_a)=F_{aa}^2=F_{aa}=u_a\otimes \psi_a,$$ and therefore, $\psi_a(u_a)=1$. Moreover, $$u_a\otimes\psi_a=F_{aa}=F_{ab}F_{ba}=f_{ab}(x_{ba})(x_{ab}\otimes f_{ba})$$ implies that $x_{ab}=\lambda_{ba}u_a$ and $f_{ba}=\mu_{ba}\psi_a$ and so $$F_{ab}=\gamma_{ab}(u_a\otimes\psi_b)$$ for a suitable positive scalar $\gamma_{ab}$. Fix $c\in\mathcal A$. Since $\psi_b(u_b)=1$ for $b\in\mathcal A$ we get $$F_{cc}=F_{cb}F_{bc}=\gamma_{cb}\gamma_{bc}(u_c\otimes\psi_b)(u_b\otimes\psi_c)=\gamma_{cb}\gamma_{bc}F_{cc}$$ yielding $\gamma_{cb}=\frac{1}{\gamma_{bc}}$. 
For an arbitrary $a \in \mathcal{A}$, the identity $F_{ab} = F_{ac} F_{cb}$ yields $\gamma_{ab} = \gamma_{ac} \gamma_{cb} = \frac{\gamma_{ac}}{\gamma_{bc}}$. Thus, if we define $\gamma_a:=\gamma_{ac}$ for all $a \in \mathcal{A}$, we obtain the desired expression
$$
F_{ab} = \frac{\gamma_a}{\gamma_b} (u_a \otimes \psi_b).
$$
Finally, the fact that for distinct atoms $a,b\in\mathcal A$ we have $\frac{\gamma_{b}^2}{\gamma_{a}^2}\psi_a(u_b)(u_b\otimes\psi_a)=F_{ba}^2=0$ gives us $\psi_a(u_b)=0$. 

(iii)  Choose any atom $e\in \mathcal A$ with $u_a\wedge e>0$. Since $e$ is an atom, there exists $\lambda>0$ such that $u_a\wedge e=\lambda e$.
Positivity of $\psi_b$ and the inequality
$$0=\psi_b(u_a)\geq \psi_b(u_a\wedge e)=\lambda \psi_b(e)$$ yield $\psi_b(e)=0$.
\end{proof}

For the proof of \Cref{a0 pride iz a0} we need to introduce the sets $\mathcal U_a$ and $\Psi_a$ as follows. 
Pick $a\in \mathcal A$,  and let $u_a$ and $\psi_a$ be as in \Cref{lema o delti}. We define
$${\mathcal U}_a=\{e\in \mathcal A:\; u_a\wedge e\ne 0\}=\{e\in \mathcal A:\; \varphi_e(u_a)\ne 0\}$$
and
$${\Psi}_a=\{e\in \mathcal A:\; \psi_a(e)\ne 0\}.$$

\begin{lemma}\label{o ua in psia}
For distinct atoms $a$ and $b$ in $\mathcal A$ we have 
$$\mathcal U_a \cap \mathcal U_b=\emptyset \qquad \textrm{and}\qquad \Psi_a\cap \Psi_b=\emptyset.$$
\end{lemma}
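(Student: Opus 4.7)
My plan is to treat both assertions by contradiction using a single scheme. For each putative intersection point $e$, the strategy will be to exhibit a strictly positive rank-one operator that (a) lies in $\mathscr A$ and (b) is dominated from above by two operators of the family $\{F_{ab}\}$ whose $\Phi^{-1}$-preimages are disjoint rank-one operators in $\mathscr A_0$. Applying $\Phi^{-1}$ (which I take to be positive by bipositivity of $\Phi$) transfers the inequalities back into $\mathscr A_0$, where a pointwise argument exploiting the fact that distinct atoms of $X$ generate disjoint one-dimensional bands forces the operator to be zero, the desired contradiction.

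For $\mathcal U_a\cap\mathcal U_b=\emptyset$ with $a\ne b$, assume $e\in\mathcal U_a\cap\mathcal U_b$. The band projections give $\varphi_e(u_a)\,e\leq u_a$ and $\varphi_e(u_b)\,e\leq u_b$, so tensoring with an arbitrary $\psi_c$ produces some $\delta>0$ with $\delta(e\otimes\psi_c)\leq F_{ac}$ and $\delta(e\otimes\psi_c)\leq F_{bc}$. The nontrivial point is that $e\otimes\psi_c\in\mathscr A$: one computes $(e\otimes\varphi_e)(u_a\otimes\psi_c)=\varphi_e(u_a)(e\otimes\psi_c)$, where $e\otimes\varphi_e\in\mathscr A_0\subseteq\mathscr A$ and $u_a\otimes\psi_c=\tfrac{\gamma_c}{\gamma_a}F_{ac}\in\Phi(\mathscr A_0)\subseteq\mathscr A$, and $\varphi_e(u_a)>0$ permits division. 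Applying $\Phi^{-1}$ yields a positive $T\in\mathscr A$ with $T\leq a\otimes\varphi_c$ and $T\leq b\otimes\varphi_c$; for each $x\geq 0$, one then has $T(x)\in[0,\varphi_c(x)a]\cap[0,\varphi_c(x)b]\subseteq\mathbb R a\cap\mathbb R b=\{0\}$, forcing $T=0$ and contradicting $\delta(e\otimes\psi_c)\ne 0$.

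The proof of $\Psi_a\cap\Psi_b=\emptyset$ is dual. If $e\in\Psi_a\cap\Psi_b$, positivity of $\psi_a$ together with $\varphi_e(x)\,e\leq x$ for $x\geq 0$ yields $\psi_a\geq\psi_a(e)\varphi_e$ as functionals (and similarly for $\psi_b$), hence $\eta(u_c\otimes\varphi_e)\leq F_{ca},F_{cb}$ for some $\eta>0$ and any atom $c$. To place $u_c\otimes\varphi_e$ in $\mathscr A$, I use that $\psi_c(u_c)=1$ combined with order continuity expands as $\sum_d\varphi_d(u_c)\psi_c(d)=1$, producing some $d\in\Psi_c$ with $\psi_c(d)>0$; then $(u_c\otimes\psi_c)(d\otimes\varphi_e)=\psi_c(d)(u_c\otimes\varphi_e)$ realises the desired inclusion. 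Applying $\Phi^{-1}$ gives a positive $T\leq c\otimes\varphi_a,\,c\otimes\varphi_b$, which is order continuous (being dominated by the order-continuous $c\otimes\varphi_a$). Testing on atoms shows $T(d)=0$ for every $d\in\mathcal A$, and then order continuity together with the representation \eqref{zapis z atomi} force $T=0$, again a contradiction.

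The principal obstacle is verifying that the rank-one operators $e\otimes\psi_c$ and $u_c\otimes\varphi_e$ actually belong to $\mathscr A$. A priori they live only in $\mathscr L_n(X)$, since neither $\psi_c$ nor $u_c$ is forced to be a coordinate functional or atom of $X$, and the bare containment $\mathscr A\supseteq\mathscr A_0$ does not cover them. The resolution is to realise these operators as scalar multiples of algebraic products of elements already known to sit in $\mathscr A$, namely factors from $\mathscr A_0$ composed with factors from $\Phi(\mathscr A_0)\subseteq\mathscr A$. A secondary point is the tacit use of bipositivity of $\Phi$, so that $\Phi^{-1}$ is positive; this seems the intended reading of ``positive algebra automorphism'' in the present setup.
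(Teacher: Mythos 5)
There is a genuine gap, and it is the point you yourself flag as ``secondary'': your argument applies $\Phi^{-1}$ to order inequalities such as $\delta(e\otimes\psi_c)\leq F_{ac}$ and needs $\Phi^{-1}$ to be positive in order to conclude $0\leq T\leq a\otimes\varphi_c$. But positivity of $\Phi^{-1}$ is \emph{not} the intended reading of ``positive algebra automorphism'' here --- it is precisely assertion (ii) of \Cref{main theorem}, proved \emph{after} and \emph{by means of} this lemma (via \Cref{a0 pride iz a0} and the structure formula \eqref{formula1}). So your proof is circular as written, and the gap cannot be patched cheaply: positivity of $\Phi$ alone only gives $\Phi(S\wedge S')\leq\Phi(S)\wedge\Phi(S')$, which transfers disjointness in the wrong direction, so there is no forward substitute for the missing positivity of $\Phi^{-1}$. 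Everything else in your write-up (the domination $\varphi_e(u_a)e\leq u_a$, the realisation of $e\otimes\psi_c$ and $u_c\otimes\varphi_e$ as products of elements of $\mathscr A_0$ and $\Phi(\mathscr A_0)$, the disjointness argument $\mathbb Ra\cap\mathbb Rb=\{0\}$) is sound, but it all sits downstream of the unjustified step.

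The paper's proof avoids order-theoretic information about $\Phi^{-1}$ entirely by replacing your order inequalities with \emph{multiplicative identities}, which any algebra isomorphism transports. Concretely, for $e\in\mathcal U_a\cap\mathcal U_b$ one first shows $\psi_c(e)=0$ for \emph{every} $c\in\mathcal A$ (since $c$ differs from at least one of $a,b$, and $\psi_c(u_a\wedge e)\leq\psi_c(u_a)=0$ for $c\neq a$); hence $F_{dc}\,(e\otimes\varphi_e)=0$ for all $c,d$. Applying $\Phi^{-1}$ to this \emph{product} gives $(d\otimes\varphi_c)\,\Phi^{-1}(e\otimes\varphi_e)=0$ for all $c,d$, and \Cref{nicelni operator atomi} forces $\Phi^{-1}(e\otimes\varphi_e)=0$, contradicting injectivity. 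The $\Psi$-case is handled symmetrically with $(e\otimes\varphi_e)F_{cd}=0$, after extracting $\psi_a\wedge\varphi_e=\lambda\varphi_e$ from the Riesz--Kantorovich formula to get $\varphi_e(u_c)=0$ for all $c$. If you recast your two domination arguments as statements that a suitable $e\otimes\varphi_e\in\mathscr A_0$ annihilates all the $F_{cd}$ on one side, your proof collapses into the paper's and the appeal to positivity of $\Phi^{-1}$ disappears.
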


\begin{proof}
We first prove $\mathcal U_a\cap \mathcal U_b=\emptyset$ for distinct atoms $a$ and $b$ in $\mathcal A$. 
Suppose there exists an atom $e\in \mathcal U_a\cap \mathcal U_b$. 
Since $e$ is an atom, there exist positive scalars $\lambda$ and $\mu$ such that $u_a\wedge e=\lambda e$ and $u_b\wedge e=\mu e$.
Pick any atom $c$ in $X$. Then $c\neq a$ or $c\neq b$, so that $0\leq \lambda \psi_c(e)=\psi_c(u_a\wedge e)\leq \psi_c(u_a)=0$ or $0 \leq \mu \psi_c(e)=\psi_c(u_b\wedge e)\leq \psi_c(u_b)=0$ yields $\psi_c(e)=0$. In particular, we have $F_{dc}(e)=0$ for arbitrary atoms $c,d\in \mathcal A$.

Now we define $S=e\otimes \varphi_e$. Then $F_{dc}S=\frac{\gamma_d}{\gamma_c}(u_d\otimes \psi_c)(e\otimes \varphi_e)=0$. If we write $T:=\Phi^{-1}(S)$, multiplicativity of $\Phi$ gives 
$$0=\phi^{-1}(F_{dc}S)=(d\otimes \varphi_c) T=d\otimes (\varphi_c\circ T)=0$$ 
for all atoms $c, d\in \mathcal A$. Hence, $\varphi_c\circ T=0$ for every atom $c\in \mathcal A$. By \Cref{nicelni operator atomi}(ii) we conclude $T=0$ which is impossible, so $\mathcal U_a\cap \mathcal U_b=\emptyset$.

Now we prove $\Psi_a\cap \Psi_b=\emptyset$ for distinct atoms $a,b\in \mathcal A$. Suppose there exists an atom $e\in \Psi_a\cap \Psi_b$. 
Then $\psi_a(e)\ne 0$ and $\psi_b(e)\ne 0$. Since $\psi_a$ and $\varphi_e$ are positive linear functionals, for an atom
$c\in\mathcal A\setminus\{e\}$ we have $0\leq (\psi_a\wedge\varphi_e)(c)\le \varphi_e(c)=0$ and so $(\psi_a\wedge\varphi_e)(c)=0$. Moreover, by the Riesz-Kantorovich formula (see e.g. \cite[Theorem 1.18]{Aliprantis:06}) we have
\begin{align*}
 \lambda := (\psi_a\wedge\varphi_e)(e)&=\inf\{\psi_a(te)+\varphi_e((1-t)e):\; t\in[0,1]\}\\
 &=\inf\{t\psi_a(e)+(1-t):\; t\in[0,1]\}\\
 &=\min\{1, \psi_a(e)\}>0.   
\end{align*}
Therefore $\psi_a\wedge\varphi_e=\lambda\varphi_e$ as $\psi_a\wedge\varphi_e$ is order continuous and $X$ is atomic. Similarly we get $\psi_b\wedge\varphi_e=\mu\varphi_e$ for some $\mu>0$.

For every atom $c\in \mathcal A$ we have $\psi_a(u_c)=0$ or $\psi_b(u_c)=0$ since $c\ne a$ or $c\ne b$. It follows that $0\le\lambda\varphi_e(u_c)=(\psi_a\wedge\varphi_e)(u_c)\le \psi_a(u_c)=0$ or $0\le\mu\varphi_e(u_c)=(\psi_b\wedge\varphi_e)(u_c)\le \psi_b(u_c)=0$ and so
$$\varphi_e(u_c)=0$$
for every atom $c\in \mathcal A$. The operator $S=e\otimes \varphi_e$ clearly satisfies  
$$SF_{cd}=(e\otimes \varphi_e)(u_c\otimes \psi_d)=\varphi_e(u_c)(e\otimes \psi_d)=0$$
for all atoms $c,d\in \mathcal A$. It follows that
$$\Phi^{-1}(S)(c\otimes \varphi_d)=0$$
for all atoms $c,d\in \mathcal A$. By \Cref{nicelni operator atomi}(i) we conclude $\Phi^{-1}(S)=0$ which is impossible. This finally proves the claim.
\end{proof}

\begin{proposition}\label{a0 pride iz a0}
Let $\mathscr A\subseteq \mathscr L_n(X)$ be any algebra of operators which contains $\mathscr A_0$. If $\Phi$ is a positive automorphism of $\mathscr A$, then for arbitrary $T\in \mathscr A$ we have $T\in \mathscr A_0$ whenever $\Phi(T)\in \mathscr A_0$.
\end{proposition}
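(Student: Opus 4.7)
The plan is to exploit the explicit description of $\Phi$ on $\mathscr A_0$ from \Cref{lema o delti} together with the disjointness properties of \Cref{o ua in psia} to transfer the finite-support characterization of $\mathscr A_0$ in \Cref{aabb} from $\Phi(T)$ back to $T$. Since $\Phi(T)\in \mathscr A_0$, I would first write $\Phi(T)=\sum_{c,d\in F}\lambda_{cd}(c\otimes\varphi_d)$ for a suitable finite subset $F\subseteq \mathcal A$.

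Next, I would compute $\Phi\bigl((a\otimes\varphi_a)\,T\,(b\otimes\varphi_b)\bigr)$ in two ways for arbitrary atoms $a,b\in\mathcal A$. On one hand, the identity $(a\otimes\varphi_a)T(b\otimes\varphi_b)=\varphi_a(Tb)(a\otimes\varphi_b)$ and $\Phi(a\otimes\varphi_b)=F_{ab}=\tfrac{\gamma_a}{\gamma_b}(u_a\otimes\psi_b)$ give $\varphi_a(Tb)\tfrac{\gamma_a}{\gamma_b}(u_a\otimes\psi_b)$. On the other hand, multiplicativity of $\Phi$ together with $F_{aa}=u_a\otimes\psi_a$ and $F_{bb}=u_b\otimes\psi_b$ give $F_{aa}\Phi(T)F_{bb}=\psi_a(\Phi(T)u_b)(u_a\otimes\psi_b)$. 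Comparing coefficients yields the key identity
$$\varphi_a(Tb)=\frac{\gamma_b}{\gamma_a}\,\psi_a(\Phi(T)u_b)=\frac{\gamma_b}{\gamma_a}\sum_{c,d\in F}\lambda_{cd}\,\varphi_d(u_b)\,\psi_a(c).$$
For this expression to be nonzero, some $c\in F$ must lie in $\Psi_a$ and some $d\in F$ must lie in $\mathcal U_b$. The disjointness statements $\Psi_a\cap\Psi_{a'}=\emptyset$ and $\mathcal U_a\cap\mathcal U_b=\emptyset$ for distinct atoms provided by \Cref{o ua in psia} ensure that each $c\in F$ belongs to at most one $\Psi_a$ and each $d\in F$ belongs to at most one $\mathcal U_b$. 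Consequently, the set of pairs $(a,b)\in\mathcal A\times\mathcal A$ with $\varphi_a(Tb)\neq 0$ has cardinality at most $\abs{F}^2$.

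Finally, I would invoke \Cref{aabb} to conclude $T\in\mathscr A_0$: order continuity of $T$ is ensured by the hypothesis $\mathscr A\subseteq\mathscr L_n(X)$, and since $X$ is atomic by the standing assumption of the section, its atomic part equals $X$ while its continuous part is $\{0\}$, so the auxiliary conditions $TA\subseteq A$ and $T|_C=0$ in \Cref{aabb} are automatically satisfied. The main obstacle is recognizing that the two disjointness facts of \Cref{o ua in psia} are exactly what is needed to convert the finite matrix support of $\Phi(T)$ into a finite matrix support for $T$; without them, a single atom $c\in F$ could have $\psi_a(c)\neq 0$ for infinitely many $a$, and the finiteness count would fail.
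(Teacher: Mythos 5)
Your argument is correct and follows essentially the same route as the paper: both transfer the finite matrix support of $\Phi(T)$ back to $T$ via the identity $\Phi\bigl((a\otimes\varphi_a)T(b\otimes\varphi_b)\bigr)=F_{aa}\Phi(T)F_{bb}$ together with the pairwise disjointness of the sets $\Psi_a$ and $\mathcal U_b$ from \Cref{o ua in psia}, and then conclude with \Cref{aabb} (whose auxiliary conditions are indeed vacuous since $X$ is atomic here). The only difference is presentational: you count directly that at most $|F|^2$ pairs $(a,b)$ can have $\varphi_a(Tb)\neq 0$, whereas the paper argues by contradiction with a two-case analysis; your version is, if anything, slightly cleaner.
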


\begin{proof}
Suppose that for $T\in\mathscr A$ we have $S:=\Phi(T)\in\mathscr A_0$ whereas $T \notin\mathscr A_0$. By \Cref{aabb} there exist infinitely many pairs $(a,b)\in\mathcal A\times\mathcal A$ such that the product $(a\otimes\varphi_a)T(b\otimes\varphi_b)\ne 0$. Applying $\Phi$ we equivalently obtain 
$F_{aa}SF_{bb}\ne 0$ for infinitely many pairs $(a,b)\in \mathcal A\times \mathcal A$. Since $S\in\mathscr A_0$ we can write
$$S=\sum_{c,d\in F}s_{cd}(c\otimes\varphi_d)$$
for some finite set $F\subseteq \mathcal A$. To conclude the proof, we consider two cases.

\emph{Case 1}: Assume there exist infinitely many $a\in\mathcal A$ such that for some $b\in \mathcal A$ the product $F_{aa}SF_{bb}$ is nonzero. Since $F$ is finite, using \Cref{o ua in psia} we derive that $F$ intersects $\Psi_a$ for only finitely many $a\in \mathcal A$. Therefore, there exists $a\in\mathcal A$ such that $F_{aa}SF_{bb}\ne 0$ for some $b$ and $\Psi_a\cap F=\emptyset$. Consequently, since $c\notin \Psi_a$ yields $\psi_a(c)=0$, we have
$$F_{aa}S=(u_a\otimes\psi_a)\sum_{c,d\in F}s_{cd}(c\otimes\varphi_d)=\sum_{c,d\in F}s_{cd}\psi_a(c)(u_a\otimes\varphi_d)=0$$
which contradicts $F_{aa}SF_{bb}\neq 0$.

\emph{Case 2}: There exists only finitely many $a\in \mathcal A$ such that for some $b\in \mathcal A$ the product $F_{aa}SF_{bb}$ is nonzero and for some $a\in \mathcal A$ the product $F_{aa}SF_{bb}$ is nonzero for infinitely many $b\in \mathcal A$. A similar argument as in Case 1 shows that there exists $b\in\mathcal A$ such that $F_{aa}SF_{bb}\ne 0$ for some $a$ and $\mathcal U_b\cap F=\emptyset$. Since $d\notin \mathcal U_b$ yields $\varphi_d(u_b)=0$, we have
$$SF_{bb}=\sum_{c,d\in F}s_{cd}(c\otimes\varphi_d)(u_b\otimes\psi_b)=\sum_{c,d\in F}s_{cd}\varphi_d(u_b)(c\otimes\psi_b)=0$$
which is again in contradiction with  $F_{aa}SF_{bb}\ne 0$.
\end{proof}

\begin{theorem}\label{main theorem}
Let $\mathscr A\subseteq \mathscr L_n(X)$ be any algebra of operators that contains $\mathscr A_0$. The following assertions hold for a positive algebra automorphism $\Phi\colon \mathscr A\to \mathscr A$.
\begin{enumerate}
  \item There exist a bijection $\pi\colon\mathcal A\to\mathcal A$ and a set of positive scalars $\{\delta_a:\; a\in\mathcal A\}$ such that
for all $a,b\in\mathcal A$ we have
\begin{align}
\Phi(a\otimes\varphi_b)&=\tfrac{\delta_a}{\delta_b} \left(\pi(a)\otimes\varphi_{\pi(b)}\right) \label{formula1}\\
\Phi^{-1}(a\otimes\varphi_b)&=\tfrac{\delta_{\pi^{-1}(b)}}{\delta_{\pi^{-1}(a)}} \left(\pi^{-1}(a)\otimes\varphi_{\pi^{-1}(b)}\right). \label{formula2}
\end{align}
In particular,
$\Phi(\mathscr A_0)=\mathscr A_0$ and $\Phi(\mathscr A_0^+)=\mathscr A_0^+$. 
  \item The inverse $\Phi^{-1}$ is positive.
\end{enumerate}
\end{theorem}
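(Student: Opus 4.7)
The plan is to use Proposition~\Cref{a0 pride iz a0} to bootstrap: each $\Phi^{-1}(a\otimes\varphi_a)$ lies in $\mathscr A_0$, which combined with the rank-one criterion of Lemma~\Cref{dim=1 rkT=1} forces it to factor as $v_a\otimes\xi_a$ with $v_a\in\Span\mathcal A$ and $\xi_a\in\Span\{\varphi_d:d\in\mathcal A\}$. Applying $\Phi$ back and invoking Lemma~\Cref{lema o delti}(ii) will pin down each $u_a$ as a positive scalar multiple of an atom $\pi(a)$, yielding the bijection and formula~\eqref{formula1}; part (ii) will follow via a finite-truncation argument.

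Writing $v_a=\sum_c\mu_c^{(a)}c$ and $\xi_a=\sum_d\nu_d^{(a)}\varphi_d$ as finite sums, the identity $\Phi(v_a\otimes\xi_a)=a\otimes\varphi_a$ combined with Lemma~\Cref{lema o delti}(ii) expands, via the factorization of the separable coefficient matrix $\mu_c^{(a)}\nu_d^{(a)}\gamma_c/\gamma_d$, as $a\otimes\varphi_a=\bigl(\sum_c\mu_c^{(a)}\gamma_c u_c\bigr)\otimes\bigl(\sum_d\tfrac{\nu_d^{(a)}}{\gamma_d}\psi_d\bigr)$. Both sides are rank-one, so the ``column'' $\sum_c\mu_c^{(a)}\gamma_c u_c$ is a nonzero scalar multiple of $a$. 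Using the pairwise disjoint supports of the $u_c$'s (Lemma~\Cref{o ua in psia}), positivity of each $u_c$, and the fact that $a$ is supported solely on $\{a\}$, the equality forces exactly one index $c=\pi^{-1}(a)$ to contribute, with $u_{\pi^{-1}(a)}$ itself a positive multiple of $a$. The resulting map $\pi^{-1}\colon\mathcal A\to\mathcal A$ is injective, and surjective by the same disjointness: otherwise some $a_0$ would satisfy $|\mathcal U_{a_0}|\ge 2$, forcing $\mathcal U_{a_0}$ to share an atom with the singleton $\mathcal U_{\pi^{-1}(e)}$ for $e\in\mathcal U_{a_0}$. Hence $\pi$ is a bijection, $u_a=\beta_a\pi(a)$ with $\beta_a>0$, and the symmetric ``row'' argument gives $\psi_a=\tau_a\varphi_{\pi(a)}$ with $\beta_a\tau_a=1$ by the normalization $\psi_a(u_a)=1$. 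Substituting into $F_{ab}=\tfrac{\gamma_a}{\gamma_b}(u_a\otimes\psi_b)$ and setting $\delta_a:=\gamma_a\beta_a$ gives \eqref{formula1}; inversion gives \eqref{formula2}. The identities $\Phi(\mathscr A_0)=\mathscr A_0$ and $\Phi(\mathscr A_0^+)=\mathscr A_0^+$ then follow from the explicit formula and the description of $\mathscr A_0^+$ via \Cref{nicelni operator atomi}.

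For (ii), given a positive $T\in\mathscr A$, its finite truncations $T_F=\sum_{a,b\in F}(a\otimes\varphi_a)T(b\otimes\varphi_b)$ lie in $\mathscr A_0^+$ by \Cref{urejenostna gostost}. Applying \eqref{formula2} term by term gives $\Phi^{-1}(T_F)\in\mathscr A_0^+$, and a direct computation shows $\Phi^{-1}(T_F)=\Phi^{-1}(T)_{\pi^{-1}(F)}$. Since $\pi$ is a bijection, as $F$ ranges over all finite subsets of $\mathcal A$ so does $\pi^{-1}(F)$, so every finite truncation of the order-continuous operator $\Phi^{-1}(T)\in\mathscr L_n(X)$ is positive. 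This forces $\varphi_c(\Phi^{-1}(T)d)\ge 0$ for all atoms $c,d$, whereupon \Cref{nicelni operator atomi}(iii) yields $\Phi^{-1}(T)\ge 0$.

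The main obstacle is the structural step of the second paragraph: rigorously combining the explicit action of $\Phi$ on $\mathscr A_0$-generators from Lemma~\Cref{lema o delti}(ii) with the disjointness from Lemma~\Cref{o ua in psia} to rule out the possibility $|\mathcal U_a|\ge 2$. The bootstrap through $\Phi^{-1}(\mathscr A_0)\subseteq\mathscr A_0$ is what makes this possible, since without it there is no reason the ``column'' vector in the expansion should lie in $\Span\mathcal A$ at all.
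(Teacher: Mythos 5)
Your argument is correct and runs on the same toolkit as the paper (\Cref{a0 pride iz a0}, \Cref{lema o delti}, \Cref{o ua in psia}, \Cref{nicelni operator atomi}), but the pivotal structural step is organized genuinely differently. The paper first proves that each $\mathcal U_a$ is a singleton by contradiction: assuming two atoms $e,f\in\mathcal U_a$, it pulls back the rank-two operator $e\otimes\varphi_e+f\otimes\varphi_f$ through $\Phi^{-1}$ and shows $e$ and $f$ would both be multiples of the single vector $\sum_b t_{ba}u_b$; surjectivity of $\pi$ is then a separate pull-back argument, and $\psi_b=\eta_b\varphi_{\pi(b)}$ is deduced from $\psi_b(\pi(a))=0$ for $a\neq b$ together with order continuity. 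You instead pull back the single idempotents $a\otimes\varphi_a$, use \Cref{dim=1 rkT=1} through the automorphism to see the preimage is a rank-one element of $\mathscr A_0$, and read off $\pi^{-1}$ directly from the factored image, with injectivity and surjectivity both coming from the disjointness of the supports $\mathcal U_c$ in \Cref{o ua in psia}; this is a clean reorganization that avoids the two-atom contradiction and produces the bijection in one stroke. Part (ii) is in substance the paper's argument (sandwich by diagonal idempotents, reduce to $\varphi_b(\Phi^{-1}(T)a)\ge 0$, finish with \Cref{nicelni operator atomi}), only packaged through finite truncations and the $\Phi^{-1}$-analogue of \Cref{pointwise_suprema}.

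One point you should make explicit: the ``row'' argument by itself only gives, for each $a$, a unique index $d_0$ with $\psi_{d_0}$ a positive multiple of $\varphi_a$; it does not immediately identify $d_0$ with the column index $c_0=\pi^{-1}(a)$, which is what the asserted identity $\psi_a=\tau_a\varphi_{\pi(a)}$ needs. This is a one-line fix using relations you already cite: since $u_{c_0}$ is a positive multiple of $a$ and $\psi_{d_0}$ a positive multiple of $\varphi_a$, one has $\psi_{d_0}(u_{c_0})>0$, whereas \Cref{lema o delti}(ii) forces $\psi_{d_0}(u_c)=0$ for $c\neq d_0$; hence $d_0=c_0$, and then $\psi_a(u_a)=1$ pins down $\beta_a\tau_a=1$ exactly as you state.
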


\begin{proof}
(i) First we prove that for arbitrary $a\in\mathcal A$ the set ${\mathcal U}_a$ is a singleton set.
Assume on the contrary that for some $a$ the set $\mathcal U_a$ contains two different atoms $e$ and $f$, and pick $b, c\in \mathcal A$. Then 
$$F_{bc}e=\frac{\gamma_b}{\gamma_c}(u_b\otimes\psi_c)(e)=\frac{\gamma_b}{\gamma_c}\psi_c(e)u_b.$$
If $c\ne a$, by \Cref{lema o delti}(iii) we conclude $\psi_c(e)=0$ which implies
$F_{bc}e=0$, and similarly, $F_{bc}(f)=0$.

By \Cref{a0 pride iz a0} there exists a finite set $F\subseteq \mathcal A$ which without loss of generality contains $a$ such that $\Phi(T)=e\otimes \varphi_e +f\otimes\varphi_f$  where $T=\sum_{b,c\in F}t_{bc}(b\otimes\varphi_c)\in \mathscr A_0$. Then 
$$\Phi(T)=\sum_{b,c\in F}t_{bc}F_{bc}.$$
Since for $c\ne a$ we have $F_{bc}e=0$ and $F_{ba}e=\frac{\gamma_b}{\gamma_a}\psi_a(e)u_b$ it follows that
$$e=\Phi(T)e=\sum_{b,c\in F}t_{bc}F_{bc}e=\sum_{b\in F}t_{ba}F_{ba}e= \tfrac{\gamma_b}{\gamma_a}\psi_a(e)\sum_{b\in\mathcal F}t_{ba}u_b.$$
Similarly we get
$$f=\tfrac{\gamma_b}{\gamma_a}\psi_a(f)\sum_{b\in F}t_{ba}u_b.$$
which is a contradiction since $e$ and $f$ are linearly independent. This proves that for each $a\in \mathcal A$ the set $\mathcal U_a$ is a singleton set which yields an injective mapping $\pi\colon \mathcal A\to \mathcal A$ such that ${\mathcal U}_a=\{\pi(a)\}$. 

Since $\mathcal U_a$ consists of all atoms that are not disjoint with $u_a$ and $X$ is atomic, an application of \eqref{zapis z atomi} yields $u_a=\kappa_a\pi(a)$ for some positive scalar $\kappa_a$. Hence, 
$$\Phi(a\otimes \varphi_b)=\kappa_a\pi(a)\otimes \psi_b.$$
To prove that $\pi$ is surjective, choose an atom $c\in\mathcal A$. Since $\Phi$ is an automorphism of $\mathscr A$, there exists $T\in \mathscr A$ such that $\Phi(T)=c\otimes \varphi_c$. By
\Cref{a0 pride iz a0} we conclude $T\in \mathscr A_0$. Therefore, there exists a finite set $F\subseteq \mathcal A$ such that 
$T=\sum_{a,b\in F} \lambda_{ab}(a\otimes \varphi_b)$ for some scalars $\lambda_{ab}$.  Since
$$c=(c\otimes \varphi_c)c=\Phi(T)c=\sum_{a,b\in F}\kappa_a (\pi(a)\otimes \psi_b)c=\sum_{a,b\in F}\kappa_a \psi_b(c)\pi(a),$$ and $\pi(a)\in \mathcal A$ for each $a\in F$, we conclude that $c=\pi(a)$ for some $a\in \mathcal A$. 

For different atoms $a,b\in\mathcal A$ we have $\kappa_a\psi_b(\pi(a))=\psi_b(u_a)=0$. Since $\kappa_a\ne 0$ we get $\psi_b(\pi(a))=0$ for all $a\ne b$. Order continuity of $\psi_b$ implies that
$$\psi_b=\eta_b\varphi_{\pi(b)}$$
for some $\eta_b>0$, and so, finally
$\Phi(a\otimes\varphi_b)=\kappa_a\eta_b(\pi(a)\otimes\varphi_{\pi(b)}).$
Since $(\Phi(a\otimes\varphi_a))^2=\Phi(a\otimes\varphi_a)$ it follows that $\kappa_a\eta_a=1$, yielding $\eta_a=\frac{1}{\kappa_a}$.
By \Cref{lema o delti} we conclude
$$\Phi(a\otimes\varphi_b)=\tfrac{\gamma_a}{\gamma_b} (u_a\otimes \psi_b)=\tfrac{\gamma_a\kappa_a}{\gamma_b\kappa_b}(\pi(a)\otimes\varphi_{\pi(b)}).$$
By introducing $\delta_a=\gamma_a\kappa_a$ for each $a\in \mathcal A$ we obtain \eqref{formula1}. One can verify \eqref{formula2} by a direct calculation. 

To prove $\Phi(\mathscr A_0)=\mathscr A_0$ and $\Phi(\mathscr A_0^+)=\mathscr A_0^+$, note that \Cref{A_0 mreza}(i), and \eqref{formula1} and \eqref{formula2} imply
$\Phi(\mathscr A_0^+)\subseteq  \mathscr A_0^+$ and $\Phi^{-1}(\mathscr A_0^+)\subseteq \mathscr A_0^+$, respectively, yielding $\Phi(\mathscr A_0^+)=\mathscr A_0^+$. Since $\mathscr A_0=\mathscr A_0^+-\mathscr A_0^+$, we obtain $\Phi(\mathscr A_0)=\mathscr A_0$.

(ii) Pick a positive operator $T$ in $\mathscr A$ and atoms $a,b\in \mathcal A$. By \eqref{formula2}, there exist atoms $c,d\in \mathcal A$ such that
$\Phi^{-1}(c\otimes \varphi_c)=b\otimes \varphi_b$ and $\Phi^{-1}(d\otimes \varphi_d)=a\otimes \varphi_a$.
Since $(c\otimes \varphi_c)T(d\otimes \varphi_d)=\varphi_c(Td)(c\otimes \varphi_d) \in \mathscr A_0^+$ and by (i) we have $\Phi^{-1}(\mathscr A_0^+)=\mathscr A_0^+$, we conclude that
\begin{align*}
\Phi^{-1}((c\otimes \varphi_c)T(d\otimes \varphi_d))a&=(b\otimes \varphi_b)\Phi^{-1}(T)(a\otimes \varphi_a)a=\varphi_b(\Phi^{-1}(T)a)b
\end{align*}
is a positive vector. In particular, $\varphi_b(\Phi^{-1}(T)a)\geq 0$ for all $a,b\in \mathcal A$. To finish the proof we apply the fact that $\Phi^{-1}(T)$ is order continuous and \Cref{nicelni operator atomi}. 
\end{proof}

Let $\mathscr A\subseteq \mathscr L_n(X)$ be any algebra of operators which contains $\mathscr A_0$ and let $\Phi\colon \mathscr A\to \mathscr A$ be a positive algebra automorphism $\Phi\colon \mathscr A\to \mathscr A$. By \Cref{main theorem} there exists a
bijection $\pi\colon\mathscr A\to\mathscr A$ and a set of positive scalars $\{\delta_a:\; a\in\mathcal A\}$ such that
for all $a,b\in\mathcal A$ we have
$$\Phi(a\otimes\varphi_b)=\tfrac{\delta_a}{\delta_b} \left(\pi(a)\otimes\varphi_{\pi(b)}\right).$$
Since $\mathcal A$ is a Hamel basis for $I_{\mathcal A}$, we can define $P, D\colon I_{\mathcal A}\to I_{\mathcal A}$ given by
$Pa=\pi(a)$ and $Da=\delta_a a$ for each $a\in \mathcal A$. Note that  $P$ and $D$ are invertible  on $I_{\mathcal A}$ with their inverses given by
$P^{-1}a=\pi^{-1}(a)$ and  $D^{-1}a=\tfrac{1}{\delta_a}a$ for each $a\in \mathcal A$. We will call operators $P$ and $D$ a \emph{permutation} and a \emph{diagonal} operator, respectively. Conversely, for any set $\{\delta_a:\; a\in \mathcal A\}$ of real numbers  we can define the diagonal operator on $I_{\mathcal A}$ as above. The set $\{\delta_a:\; a\in \mathcal A\}$ is called the set of all \emph{diagonal coefficients} of the operator $D$.  The operator $D\colon I_{\mathcal A}\to I_{\mathcal A}$ is positive if and only if $\delta_a\geq 0$ for every $a\in \mathcal A$ and $D$ is bijective if and only if $\delta_a\neq 0$ for every $a\in \mathcal A$. 

The following lemma will be needed in the proof of \Cref{reprezentacija avtomorfizma}.
 
\begin{lemma}\label{pointwise_suprema}
Let  $\mathscr A\subseteq \mathscr L_n(X)$ be a subalgebra containing $\mathscr A_0$ and $\Phi\colon \mathscr A\to \mathscr A$ be a positive algebra automorphism. Then for every $T\in \mathscr A$ we have
$$\Phi(T_F)=\Phi(T)_{\pi(F)}$$
for every finite subset $F\subseteq \mathcal A$. Moreover, for every positive operator $T\in \mathscr A$ and every positive vector $x\in X$  we have 
$\Phi(T_F)x\nearrow \Phi(T)x$ where $F$ runs over the family $\mathcal F$ of all finite subsets of $\mathcal A$ ordered by set inclusion. 
\end{lemma}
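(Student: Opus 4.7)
The strategy is to reduce both assertions to the explicit formulas from \Cref{main theorem}, after which the whole lemma becomes a bookkeeping exercise combined with a single appeal to \Cref{urejenostna gostost}.

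For the identity $\Phi(T_F)=\Phi(T)_{\pi(F)}$, I would proceed by direct computation. The key simplification is that \eqref{formula1} specializes, for $a=b$, to $\Phi(a\otimes\varphi_a)=\pi(a)\otimes\varphi_{\pi(a)}$, since the factor $\delta_a/\delta_a$ equals $1$. Applying linearity and multiplicativity of $\Phi$ to the defining expression $T_F=\sum_{a,b\in F}(a\otimes\varphi_a)T(b\otimes\varphi_b)$, one obtains
$$\Phi(T_F)=\sum_{a,b\in F}(\pi(a)\otimes\varphi_{\pi(a)})\,\Phi(T)\,(\pi(b)\otimes\varphi_{\pi(b)}).$$
Because $\pi\colon\mathcal A\to\mathcal A$ is a bijection by \Cref{main theorem}, the reindexing $(a',b')=(\pi(a),\pi(b))$ transforms the right-hand side into $\sum_{a',b'\in\pi(F)}(a'\otimes\varphi_{a'})\Phi(T)(b'\otimes\varphi_{b'})=\Phi(T)_{\pi(F)}$, as required.

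For the pointwise convergence statement, the plan is to bootstrap from \Cref{urejenostna gostost} applied to $\Phi(T)$ itself. Since $\Phi$ is positive and $\mathscr A\subseteq\mathscr L_n(X)$, the operator $\Phi(T)$ is positive and order continuous, so \Cref{urejenostna gostost} yields $\Phi(T)_{F'}x\nearrow \Phi(T)x$ as $F'$ ranges over $\mathcal F$. Combined with the first part one has $\Phi(T_F)x=\Phi(T)_{\pi(F)}x$; since $\pi$ is a bijection on $\mathcal A$, the induced map $F\mapsto\pi(F)$ is an order-preserving bijection of $(\mathcal F,\subseteq)$. Hence the net $(\Phi(T_F)x)_{F\in\mathcal F}$ is simply a reindexing of $(\Phi(T)_{F'}x)_{F'\in\mathcal F}$ along this bijection of directed sets and therefore has the same order limit $\Phi(T)x$.

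The only subtlety to verify is that $T_F\in\mathscr A$ so that $\Phi(T_F)$ is well defined, which is immediate because each summand $(a\otimes\varphi_a)T(b\otimes\varphi_b)$ is a product of elements of $\mathscr A\supseteq\mathscr A_0$. Apart from this trivial check, I expect no real obstacle: the statement is essentially a corollary of the structural \Cref{main theorem} and the truncation-approximation \Cref{urejenostna gostost}.
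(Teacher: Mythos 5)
Your proposal is correct and follows essentially the same route as the paper: the identity $\Phi(T_F)=\Phi(T)_{\pi(F)}$ by multiplicativity of $\Phi$ together with \eqref{formula1} for $a=b$ and reindexing along the bijection $\pi$, and then the convergence statement by applying \Cref{urejenostna gostost} to the positive order continuous operator $\Phi(T)$ and observing that $F\mapsto\pi(F)$ is an inclusion-preserving bijection of $\mathcal F$. No gaps; the extra remark that $T_F\in\mathscr A$ is a reasonable (if routine) check the paper leaves implicit.
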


\begin{proof}
Let us choose a finite subset $F\subseteq \mathcal A$ and write
$T_F=\sum_{a,b\in F}(a\otimes \varphi_a)T(b\otimes \varphi_b)$. Since $\Phi$ is an algebra homomorphism, by \Cref{main theorem} we have
\begin{align*}
\Phi(T_F) &= \sum_{a,b\in F} \Phi(a\otimes \varphi_a)\Phi(T)\Phi(b\otimes \varphi_b)
= \sum_{a,b\in F} (\pi(a)\otimes \varphi_{\pi(a)}) \Phi(T) (\pi(b)\otimes \varphi_{\pi(b)}) \\
&= \sum_{c,d \in \pi(F)}(c\otimes \varphi_c)\Phi(T)(d\otimes \varphi_d)= \Phi(T)_{\pi(F)}.
\end{align*}
To prove the moreover statement, note that \Cref{urejenostna gostost} yields 
$$\Phi(T_F)x=\Phi(T)_{\pi(F)}x\nearrow \Phi(T)x$$ since $\pi\colon \mathcal A\to \mathcal A$ is a bijection. 
\end{proof}

\begin{theorem}\label{reprezentacija avtomorfizma}
Let $\mathscr A\subseteq \mathscr L_n(X)$ be a subalgebra containing $\mathscr A_0$ and let $\Phi\colon \mathscr A\to \mathscr A$ be a positive algebra automorphism. Then for each $T\in \mathscr A_0$ the ideal $I_{\mathcal A}$ is invariant under $\Phi(T)$ and we have
$$\Phi(T)|_{_{I_{\mathcal A}}}=PDTD^{-1}P^{-1}|_{_{I_{\mathcal A}}}.$$ 
Furthermore, for each positive $T\in \mathscr A$ we have
$$\Phi(T)|_{_{I_{\mathcal A}}}=\sup \{PDSD^{-1}P^{-1}|_{_{I_{\mathcal A}}}:\; S\in [0,T]\cap \mathscr A_0\}.$$
\end{theorem}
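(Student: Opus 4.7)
The plan is to leverage \Cref{main theorem} directly together with the truncation lemmas from \Cref{operators_atomic_VL}. Recall that \Cref{main theorem} gives a bijection $\pi\colon \mathcal A\to \mathcal A$ and positive scalars $\{\delta_a : a\in \mathcal A\}$ such that
\[
\Phi(a\otimes \varphi_b) = \tfrac{\delta_a}{\delta_b}\bigl(\pi(a)\otimes \varphi_{\pi(b)}\bigr),
\]
and these data are exactly what define the permutation operator $P$ and diagonal operator $D$ on $I_{\mathcal A}$ introduced just before \Cref{pointwise_suprema}. Moreover $\Phi(\mathscr A_0)=\mathscr A_0$.

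For the first assertion I would take an arbitrary $T=\sum_{a,b\in F}\lambda_{ab}(a\otimes \varphi_b)\in \mathscr A_0$ and apply $\Phi$ linearly to obtain
\[
\Phi(T)=\sum_{a,b\in F}\lambda_{ab}\tfrac{\delta_a}{\delta_b}\bigl(\pi(a)\otimes \varphi_{\pi(b)}\bigr)\in \mathscr A_0.
\]
Since every summand sends $I_{\mathcal A}$ into $I_{\mathcal A}$, so does $\Phi(T)$. To verify the formula, evaluate $PDTD^{-1}P^{-1}$ on an arbitrary atom $c\in \mathcal A$ step by step: $P^{-1}c=\pi^{-1}(c)$, then $D^{-1}\pi^{-1}(c)=\delta_{\pi^{-1}(c)}^{-1}\pi^{-1}(c)$, then $T\pi^{-1}(c)=\sum_{a\in F}\lambda_{a,\pi^{-1}(c)}a$ whenever $\pi^{-1}(c)\in F$ (zero otherwise), then $D$ and $P$ produce $\sum_{a\in F}\lambda_{a,\pi^{-1}(c)}\tfrac{\delta_a}{\delta_{\pi^{-1}(c)}}\pi(a)$. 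The same value is produced by $\Phi(T)c$ using $\varphi_{\pi(b)}(c)=1$ precisely when $b=\pi^{-1}(c)$. Agreement on atoms extends to $I_{\mathcal A}$ by linearity.

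For the second assertion, fix a positive $T\in \mathscr A$ and read both sides pointwise on $x\in I_{\mathcal A}^+$. For any $S\in [0,T]\cap \mathscr A_0$, positivity of $\Phi$ gives $0\le \Phi(S)\le \Phi(T)$, and the first part rewrites $\Phi(S)|_{I_{\mathcal A}}$ as $PDSD^{-1}P^{-1}|_{I_{\mathcal A}}$; hence $\Phi(T)|_{I_{\mathcal A}}$ pointwise dominates the proposed family. For the reverse inequality I would invoke the finite truncations $T_F$: by \Cref{urejenostna gostost} each $T_F$ belongs to $[0,T]\cap \mathscr A_0$, and by the moreover part of \Cref{pointwise_suprema}, $\Phi(T_F)x\nearrow \Phi(T)x$ for every $x\in X^+$, in particular for $x\in I_{\mathcal A}^+$. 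Combining this with the first part yields $\sup_{F}PDT_FD^{-1}P^{-1}x=\Phi(T)x$, which gives the claimed supremum formula.

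No real obstacle arises here: the substantive work has been carried out in \Cref{main theorem}, \Cref{urejenostna gostost}, and \Cref{pointwise_suprema}. The only subtlety worth flagging is that the supremum in the statement should be read pointwise on positive vectors of $I_{\mathcal A}$, since neither $I_{\mathcal A}$ nor the ambient operator space is assumed to be Dedekind complete; once this is understood the two parts fit together mechanically.
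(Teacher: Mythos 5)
Your proposal is correct and follows essentially the same route as the paper: the first identity is verified by evaluating $PDTD^{-1}P^{-1}$ on atoms using the formula from \Cref{main theorem} (the paper does this for the generators $a\otimes\varphi_b$ and extends by linearity, which is equivalent to your direct computation), and the supremum formula is obtained exactly as you describe, from \Cref{urejenostna gostost} and the pointwise convergence in \Cref{pointwise_suprema}. Your closing remark about reading the supremum pointwise on $I_{\mathcal A}^+$ matches the paper's own handling of that step.
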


\begin{proof}
Since the set $\{a\otimes \varphi_b:\; a,b\in \mathcal A\}$ spans $\mathscr A_0$, by linearity of $\Phi$ it suffices to prove $\Phi(a\otimes \varphi_b)|_{I_{\mathcal A}}=PD(a\otimes \varphi_b)D^{-1}P^{-1}|_{_{I_{\mathcal A}}}$.
Pick any $c\in \mathcal A$. Then
\begin{align*}
PD(a\otimes \varphi_b)D^{-1}P^{-1}c& = PD(a\otimes \varphi_b)D^{-1}\pi^{-1}(c)=\frac{1}{\delta_{\pi^{-1}(c)}}PD(a\otimes \varphi_b)\pi^{-1}(c)\\
&=\frac{\varphi_b(\pi^{-1}(c))}{\delta_{\pi^{-1}(c)}}PDa=\frac{\varphi_b(\pi^{-1}(c))}{\delta_{\pi^{-1}(c)}}\,\delta_a\,\pi(a).
\end{align*}
If $c=\pi(b)$, then $PD(a\otimes \varphi_b)D^{-1}P^{-1}c=\frac{\delta_a}{\delta_b}\pi(a)$. Otherwise, we have $PD(a\otimes \varphi_b)D^{-1}P^{-1}c=0$ proving $\Phi(T)|_{_{I_{\mathcal A}}}=PDTD^{-1}P^{-1}|_{_{I_{\mathcal A}}}$.

To prove the second formula, choose a positive operator $T\in \mathscr L_n(X)$. By  \Cref{urejenostna gostost} we have that $T=\sup_{F\in \mathcal F}T_F$ is the supremum of the increasing net $(T_F)_{F\in \mathcal F}$ where $F$ runs over the family $\mathcal F$ of all finite subsets of $\mathcal A$ ordered by set inclusion. Clearly, we have $T_F\in [0,T]\cap \mathscr A_0$, and for each 
$S\in [0,T]\cap \mathscr A_0$ we can find $F\in \mathcal F$ such that $S\leq T_F$. By \Cref{pointwise_suprema} we have $\Phi(T_F)x\nearrow \Phi(T)x$ for each positive vector $x\in X$. In particular, this holds for every positive vector $x\in I_{\mathcal A}$ which yields $\Phi(T_F)|_{I_{\mathcal A}}\nearrow \Phi(T)|_{I_{\mathcal A}}$. To finish the proof note that we have

\begin{align*}
\Phi(T)|_{I_{\mathcal A}}&=\sup_{F\in \mathcal F}\Phi(T_F)|_{I_{\mathcal A}}=\sup \{\Phi(S)|_{I_{\mathcal A}}:\; S\in [0,T]\cap \mathscr A_0\}\\
&=\sup \{PDSD^{-1}P^{-1}|_{_{I_{\mathcal A}}}:\; S\in [0,T]\cap \mathscr A_0\}.\qedhere
\end{align*}
\end{proof}

For the converse, assume $\Phi\colon \mathscr A\to \mathscr A$ is a linear operator such that there exist a positive invertible diagonal operator $D$ and a permutation operator $P$ such that for each $T\in \mathscr A_0$ the ideal $I_{\mathcal A}$ is invariant under $\Phi(T)$ and we have
$$\Phi(T)|_{_{I_{\mathcal A}}}=PDTD^{-1}P^{-1}|_{_{I_{\mathcal A}}}.$$ 
Then for $T,S\in \mathscr A$ we have 
\begin{align*}
\Phi(TS)\big|_{I_{\mathcal A}}
&= PDTSD^{-1}P^{-1}\big|_{I_{\mathcal A}}
= PDTD^{-1}P^{-1} PDSD^{-1}P^{-1}\big|_{I_{\mathcal A}} \\
&= PDTD^{-1}P^{-1}\big|_{I_{\mathcal A}} \; PDSD^{-1}P^{-1}\big|_{I_{\mathcal A}}\\
&=\Phi(T)\big|_{I_{\mathcal A}}\Phi(S)\big|_{I_{\mathcal A}}
\end{align*}
Since operators in $\mathscr A$ are order continuous and $I_{\mathcal A}$ is order dense in $X$, we have $\Phi(TS)=\Phi(T)\Phi(S)$ for all $T,S\in \mathscr A$. 

\section{Algebras of operators on $c_{00}(\Lambda)$} \label{section c00}

Let $X$ be an atomic Archimedean vector lattice. If we fix any maximal set $\mathcal A$ of pairwise disjoint atoms in $X$, then the ideal $I_{\mathcal A}$ generated by atoms in $\mathcal A$ equals the linear span of $\mathcal A$. Hence, $I_{\mathcal A}$ can be realized as the vector lattice $c_{00}(\mathcal A)$ of all finitely supported functions defined on $\mathcal A$. In order to better understand the structure of positive automorphisms from \Cref{reprezentacija avtomorfizma} we restrict ourself to the special case where the underlying vector lattice $X$ is the vector lattice
$c_{00}(\Lambda)$ equipped with the supremum norm. By $e_\lambda$ we denote the characteristic function of the set $\{\lambda\}$. When $\Lambda=\mathbb N$, we rather write $c_{00}$ instead of $c_{00}(\mathbb N)$. Since it is standard to interpret elements of $c_{00}$ as eventually null sequences, we can also treat elements of $c_{00}(\Lambda)$ as finitely supported ``sequences" indexed by the index set $\Lambda$.  As special cases of \Cref{c00 version} and \Cref{c00 bounded version}, we deduce that every positive algebra automorphism of $\mathscr{L}(c_{00}(\Lambda))$ and $\mathscr{B}(c_{00}(\Lambda))$ is inner.
Since \Cref{reprezentacija avtomorfizma} is applicable only to subalgebras contained in the algebra of order continuous operators, it is important to find all order continuous operators in $\mathscr{B}(c_{00}(\Lambda))$. 
In \Cref{c00 - operator theoretical} we provide an operator theoretical characterization of vector lattices that are lattice isomorphic to a vector lattice of the form $c_{00}$. In particular, it follows that every linear operator on $c_{00}(\Lambda)$ is order continuous, in particular, also order bounded by \cite[Theorem 2.1]{AS05}.
 
\begin{theorem}\label{c00 - operator theoretical}
For an Archimedean vector lattice $X$ the following statements are equivalent.
\begin{enumerate}
  \item $X$ is lattice isomorphic to $c_{00}(\Lambda)$ for some non-empty index set $\Lambda$.
  \item $\mathscr L(X,Y)=\mathscr L_n(X,Y)$ for every Archimedean vector lattice $Y$.
  \item $\mathscr L(X,Y)=\mathscr L_b(X,Y)$ for every Archimedean vector lattice $Y$.
  \item $X^\sim=X'$.
\end{enumerate}
\end{theorem}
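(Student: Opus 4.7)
The plan is to prove the cycle $(i) \Rightarrow (ii) \Rightarrow (iii) \Rightarrow (iv) \Rightarrow (i)$. The middle implications are immediate: $(ii) \Rightarrow (iii)$ follows from the inclusion $\mathscr L_n(X,Y) \subseteq \mathscr L_b(X,Y)$, and $(iii) \Rightarrow (iv)$ from specializing to $Y = \mathbb R$.

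For $(i) \Rightarrow (ii)$, the key observation is that order null nets in $c_{00}(\Lambda)$ are eventually supported in a fixed finite set. If $x_\alpha \goeso 0$, pick a dominating net $y_\beta \searrow 0$ with $\abs{x_\alpha} \le y_\beta$ eventually, and fix any $\beta_0$. For $\beta \ge \beta_0$ we have $y_\beta \le y_{\beta_0}$, so every such $y_\beta$ is supported in the finite set $F := \supp(y_{\beta_0})$ and its coefficients $y_\beta(\lambda)$ decrease to $0$ for each $\lambda \in F$. Given any $T \in \mathscr L(c_{00}(\Lambda), Y)$, setting $z_\beta := \sum_{\lambda \in F} y_\beta(\lambda)\,\abs{T e_\lambda}$ yields a net in $Y$ with $z_\beta \searrow 0$ and $\abs{T x_\alpha} \le z_\beta$ eventually, proving $T x_\alpha \goeso 0$. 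Notice that no order-boundedness hypothesis on $T$ enters the argument, which is exactly the special feature of $c_{00}(\Lambda)$.

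The main obstacle is $(iv) \Rightarrow (i)$. The strategy is to show that $X^\sim = X'$ forces every principal ideal $I_u$ to be finite-dimensional. Suppose some $I_u$ is infinite-dimensional. By the discussion preceding \Cref{n-dimensional order closed}, the lattice norm $\norm{\cdot}_u$ turns $I_u$ into an infinite-dimensional normed lattice whose closed unit ball is precisely the order interval $[-u,u]$. Any infinite-dimensional normed space admits a linear functional that is unbounded on its unit ball (by a standard Hamel basis argument), and extending such a functional linearly to $X$ via a Hamel basis complement produces a member of $X'$ which is unbounded on $[-u,u]$ and hence not in $X^\sim$, contradicting (iv).

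Once every principal ideal is finite-dimensional, the standard structure of finite-dimensional Archimedean vector lattices gives $I_u \cong \mathbb R^{n(u)}$, so each $u \in X^+$ is a finite positive combination of atoms of $I_u$, which are automatically atoms of $X$. Choose a maximal family $\mathcal A$ of pairwise disjoint atoms of $X$; since any two nondisjoint atoms in an Archimedean vector lattice are positive multiples of one another (as recalled in \Cref{preliminaries}), each atom of $I_u$ is a positive multiple of some member of $\mathcal A$. Hence $X^+ \subseteq \Span \mathcal A$, so $X = \Span \mathcal A$, and the coordinate map $u \mapsto (\varphi_a(u))_{a \in \mathcal A}$ is the desired lattice isomorphism $X \cong c_{00}(\mathcal A)$.
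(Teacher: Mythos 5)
Your proof is correct and uses the same cyclic decomposition $(i)\Rightarrow(ii)\Rightarrow(iii)\Rightarrow(iv)\Rightarrow(i)$ as the paper; the implications $(i)\Rightarrow(ii)\Rightarrow(iii)\Rightarrow(iv)$ are essentially identical (your dominating net $z_\beta=\sum_{\lambda\in F}y_\beta(\lambda)\,\abs{Te_\lambda}$ is the paper's $\varepsilon f$ in slightly different clothing). The genuine divergence is in $(iv)\Rightarrow(i)$. Both arguments reduce the problem to showing that every principal ideal $I_u$ is finite-dimensional, but by different mechanisms: the paper extracts an infinite pairwise disjoint sequence $0\le e_n\le u$ (citing \cite[Theorem 26.10]{LZ71}), defines a functional equal to $1$ on each $e_n$, and uses the Riesz--Kantorovich modulus applied to the partial sums $y_n=\bigvee_{k=1}^n e_k\le u$ to contradict order boundedness; you instead observe that the closed unit ball of the $M$-norm $\norm{\cdot}_u$ is exactly the order interval $[-u,u]$ (a point that itself uses the Archimedean hypothesis, and is worth stating explicitly), so that every order bounded functional restricts to a norm-continuous functional on $(I_u,\norm{\cdot}_u)$, and the existence of a discontinuous functional on any infinite-dimensional normed space finishes the job. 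Your route trades the disjoint-sequence theorem for a standard normed-space fact and is arguably more economical; the paper's stays entirely inside lattice theory. Likewise, for the concluding step you build the isomorphism $X\cong c_{00}(\mathcal A)$ by hand (atoms of $I_u$ are atoms of $X$ because $I_u$ is an ideal, every positive element is a finite positive combination of members of a maximal disjoint family $\mathcal A$, and pairwise disjointness makes the coordinate map a lattice isomorphism), where the paper simply cites \cite[Theorem 61.4]{LZ71}. All of your steps check out, so this is a valid alternative proof of the only nontrivial implication.
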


\begin{proof}
(i) $\Rightarrow$ (ii)
Assume first that $X$ is lattice isomorphic to $c_{00}(\Lambda)$ for some non-empty set $\Lambda$. Since the inclusion $\mathscr L_n(X,Y)\subseteq \mathscr L(X,Y)$ always holds, it suffices to prove $\mathscr L(X,Y)\subseteq \mathscr L_n(X,Y)$.

Pick any lattice isomorphism $T\colon c_{00}(\Lambda)\to X$. If $S_0\in \mathscr L(X,Y)$, then $S_0T\in \mathscr L(c_{00}(\Lambda),Y)$. We claim that $S:=S_0T\in \mathscr L_n(c_{00}(\Lambda),Y)$. To this end, pick any net $(x_\alpha)_\alpha$ in $c_{00}(\Lambda)$ that converges in order to $0$. By passing to a tail, if necessary, we may assume that the net $(x_\alpha)_\alpha$ is order bounded. Hence, there exists $x\geq 0$ in $c_{00}(\Lambda)$ such that $0 \leq |x_\alpha|\leq x$ for every $\alpha$. There exists a finite subset $\Lambda_0\subseteq \Lambda$ such that $x=\sum_{\lambda\in\Lambda_0}x^{(\lambda)} e_\lambda$. Since $x_\alpha\to 0$ in order, it follows that for every $\lambda\in \Lambda_0$ we have  $x_\alpha^{(\lambda)}\to 0$ in $\mathbb R$. Pick any $\varepsilon>0$. Since $\Lambda_0$ is finite, there exists an index $\alpha_\varepsilon$ such that for all $\alpha\geq \alpha_\varepsilon$ and each $\lambda\in \Lambda_0$ we have $0 \leq |x_\alpha^{(\lambda)}|<\varepsilon$. Let us denote by  $f$ the vector $\sum_{\lambda\in \Lambda_0}|Se_\lambda|$. Then for every $\alpha\geq \alpha_\varepsilon$ we have
$$0\leq |Sx_\alpha|=\left|\sum_{\lambda\in \Lambda_0}x_\alpha^{(\lambda)}Se_\lambda\right|\leq \varepsilon f.$$
Since $Y$ is Archimedean, we have $\varepsilon f \searrow 0$ as $\varepsilon \searrow 0$, so that $Sx_\alpha\to 0$ in order which proves order continuity of $S$. Since $T$ is a lattice isomorphism, it follows that
$S_0=ST^{-1}$ is order continuous.

(ii) $\Rightarrow$ (iii)
Since every order continuous operator is order bounded by \cite[Theorem 2.1]{AS05}, we have
$$\mathscr L(X,Y) = \mathscr L_n(X,Y) \subseteq \mathscr L_b(X,Y)\subseteq  \mathscr L(X,Y).$$

(iii) $\Rightarrow$ (iv) We take $Y=\mathbb R$. 

(iv) $\Rightarrow$ (i)
Pick any positive vector $x\in X$ and consider the principal ideal $I_x$ equipped with the norm $\|\cdot\|_x$. We claim that $I_x$ is finite-dimensional. If this were not the case, then by \cite[Theorem 26.10]{LZ71} $I_x$ would contain an infinite sequence $(e_n)_{n\in\mathbb N}$ of pairwise disjoint positive vectors. By scaling, if necessary, assume that $0\leq e_n\leq x$. If we define the vector
 $y_n:=\sum_{k=1}^n e_k$, disjointness and positivity of vectors $e_1,\ldots,e_n$ yields
 $$0\leq y_n=\sum_{k=1}^ne_k=\bigvee_{k=1}^ne_k\leq x.$$
 Since disjoint vectors are always linearly independent, the set $\{e_n:\; n\in \mathbb N\}$ can be extended to a basis of $X$. Pick any linear functional $\varphi$ with $\varphi(e_n)=1$ for each $n\in \mathbb N$. Since, by assumption, $\varphi$ is order bounded, by the Riesz-Kantorovich theorem \cite[Theorem 1.18]{Aliprantis:06} there exists $|\varphi|\colon X\to \mathbb R$. As $|\varphi(z)|\leq |\varphi|(|z|)$ for every vector $z$, for each $n\in\NN$ we get
 $$|\varphi|(x)\geq |\varphi|(y_n)\geq \varphi(y_n)=n$$
 which is clearly impossible. This contradiction shows that $I_x$ is finite-dimensional.  By \cite[Theorem 61.4]{LZ71} it follows that $X$ is lattice isomorphic to $c_{00}(\Lambda)$ for some nonempty set $\Lambda$.
\end{proof}

\begin{corollary} 
For a nonempty set $\Lambda$ and for every Archimedean vector lattice $Y$ we have
$$\mathscr L(c_{00}(\Lambda),Y)=\mathscr L_b(c_{00}(\Lambda),Y)=\mathscr L_n(c_{00}(\Lambda),Y).$$
In particular $c_{00}(\Lambda)'=c_{00}(\Lambda)^\sim=c_{00}(\Lambda)_n^\sim$.
\end{corollary}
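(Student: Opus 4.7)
The statement is essentially a direct translation of \Cref{c00 - operator theoretical} to the special case $X=c_{00}(\Lambda)$, so the plan is simply to invoke that theorem and unwind the notation. My approach would be to verify that the hypothesis of (i) is trivially satisfied—namely, the identity map is a lattice isomorphism $c_{00}(\Lambda)\to c_{00}(\Lambda)$—and then read off the two desired equalities from the equivalences (i)$\Leftrightarrow$(ii) and (i)$\Leftrightarrow$(iii) of the theorem.

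More explicitly, the plan is as follows. First I would note that since $c_{00}(\Lambda)$ is Archimedean and lattice isomorphic to itself via the identity, \Cref{c00 - operator theoretical} applied with $X=c_{00}(\Lambda)$ yields both
$$\mathscr L(c_{00}(\Lambda),Y)=\mathscr L_n(c_{00}(\Lambda),Y)\qquad\text{and}\qquad \mathscr L(c_{00}(\Lambda),Y)=\mathscr L_b(c_{00}(\Lambda),Y)$$
for every Archimedean vector lattice $Y$. Combining these two equalities gives the chain $\mathscr L(c_{00}(\Lambda),Y)=\mathscr L_b(c_{00}(\Lambda),Y)=\mathscr L_n(c_{00}(\Lambda),Y)$. (One can also observe that the inclusions $\mathscr L_n\subseteq \mathscr L_b\subseteq \mathscr L$ always hold—the first by \cite[Theorem 2.1]{AS05} as already invoked in the proof of the theorem, and the second by definition—so a single one of (ii) or (iii) from \Cref{c00 - operator theoretical} would already force the full triple equality.)

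For the ``in particular'' part, I would specialize $Y=\mathbb R$, which is itself an Archimedean vector lattice. Unwinding the abbreviations introduced in \Cref{preliminaries}, this gives
$$c_{00}(\Lambda)'=\mathscr L(c_{00}(\Lambda),\mathbb R)=\mathscr L_b(c_{00}(\Lambda),\mathbb R)=\mathscr L_n(c_{00}(\Lambda),\mathbb R)=c_{00}(\Lambda)^\sim=c_{00}(\Lambda)_n^\sim,$$
completing the proof. There is no real obstacle here: the entire substantive work has already been carried out in \Cref{c00 - operator theoretical}, and the corollary is merely the concrete realization of that equivalence in the canonical example, so my ``proof'' is essentially a one-line invocation followed by bookkeeping of notation.
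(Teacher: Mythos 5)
Your proposal is correct and matches the paper's (implicit) reasoning exactly: the corollary is stated without proof as an immediate specialization of \Cref{c00 - operator theoretical} to $X=c_{00}(\Lambda)$, using equivalences (ii) and (iii) and then taking $Y=\mathbb R$ for the ``in particular'' part. Nothing further is needed.
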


The following corollary implies that every positive algebra automorphism of ${\mathscr L}(c_{00}(\Lambda))$ is inner.

\begin{corollary}\label{c00 version}
Let $\Lambda$ be a nonempty set and
let ${\mathscr A}$ be subalgebra of ${\mathscr L}(c_{00}(\Lambda))$ which contains $\mathscr A_0$. If $\Phi\colon \mathscr A\to \mathscr A$ is a positive algebra automorphism, then
$$\Phi(T)=PDTD^{-1}P^{-1}$$
for some permutation operator $P$ and some positive invertible diagonal operator $D$ on $c_{00}(\Lambda)$.
\end{corollary}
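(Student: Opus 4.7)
The key observation is that for $X=c_{00}(\Lambda)$ the ideal $I_{\mathcal A}$ generated by the atoms equals all of $X$, so the phrase ``on $I_{\mathcal A}$'' appearing in \Cref{reprezentacija avtomorfizma} becomes vacuous. Combined with the preceding corollary, which yields $\mathscr L(c_{00}(\Lambda))=\mathscr L_n(c_{00}(\Lambda))$ and hence $\mathscr A\subseteq\mathscr L_n(c_{00}(\Lambda))$, I can apply \Cref{reprezentacija avtomorfizma} directly. This produces a permutation operator $P$ and a positive invertible diagonal operator $D$ on $c_{00}(\Lambda)$ such that
$$
\Phi(T) = PDTD^{-1}P^{-1} \qquad\text{for every } T\in\mathscr A_0,
$$
as operators on the whole of $c_{00}(\Lambda)$.

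The remaining task is to extend this identity from $\mathscr A_0$ to the full algebra $\mathscr A$. My plan is a matrix-element comparison. Given $T\in\mathscr A$ and atoms $a,b\in\mathcal A$, the product $(a\otimes\varphi_a)T(b\otimes\varphi_b)=\varphi_a(Tb)\,(a\otimes\varphi_b)$ lies in $\mathscr A_0$. Applying the algebra homomorphism $\Phi$ and using the formula \eqref{formula1} from \Cref{main theorem}, I would obtain
$$
\varphi_{\pi(a)}\bigl(\Phi(T)\pi(b)\bigr) = \tfrac{\delta_a}{\delta_b}\,\varphi_a(Tb),
$$
and a direct unwinding of the definitions of $P$ and $D$ produces exactly the same value for $\varphi_{\pi(a)}\bigl(PDTD^{-1}P^{-1}\pi(b)\bigr)$.

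Since $\pi$ is a bijection of $\mathcal A$, the two identities above say that for every atom $f\in\mathcal A$ the vectors $\Phi(T)f$ and $PDTD^{-1}P^{-1}f$ in $c_{00}(\Lambda)$ have the same value under every coordinate functional $\varphi_e$. \Cref{pozfia je poz} then forces $\Phi(T)f=PDTD^{-1}P^{-1}f$ for every atom $f$, and because $\mathcal A$ is a Hamel basis of $c_{00}(\Lambda)$, linearity upgrades this to an equality of operators on all of $c_{00}(\Lambda)$. The only place requiring genuine care is the bookkeeping in the direct computation of $\varphi_{\pi(a)}(PDTD^{-1}P^{-1}\pi(b))$; everything else is a formal consequence of the structural results already proved.
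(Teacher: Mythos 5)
Your argument is correct, but it proves the extension from $\mathscr A_0$ to $\mathscr A$ by a different mechanism than the paper. The paper first invokes the \emph{second} formula of \Cref{reprezentacija avtomorfizma}, writing $\Phi(T)=\sup\{PDSD^{-1}P^{-1}:\;S\in[0,T]\cap\mathscr A_0\}$ for \emph{positive} $T$, and then removes the supremum by viewing $[0,T]\cap\mathscr A_0$ as an increasing net with supremum $T$, applying the operators pointwise and using order continuity of $PD$. You instead bypass the supremum machinery entirely: sandwiching $T$ between $a\otimes\varphi_a$ and $b\otimes\varphi_b$ lands you in $\mathscr A_0$, where \eqref{formula1} pins down the matrix coefficient $\varphi_{\pi(a)}(\Phi(T)\pi(b))=\tfrac{\delta_a}{\delta_b}\varphi_a(Tb)$, and the same coefficient computation for $PDTD^{-1}P^{-1}$ (expanding $Tb$ in the atomic basis, which is legitimate since $Tb\in c_{00}(\Lambda)=\Span\mathcal A$) plus the fact that a vector of $c_{00}(\Lambda)$ is determined by its coordinates and that $\mathcal A$ is a Hamel basis finishes the job. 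Your route is more elementary, and it has a genuine advantage: it treats an arbitrary $T\in\mathscr A$ directly, whereas the paper's argument as written only covers positive $T$ and would need the decomposition $T=T^+-T^-$ to conclude in general --- a decomposition that need not stay inside a proper subalgebra $\mathscr A$. What the paper's approach buys in exchange is a template that survives outside the $c_{00}(\Lambda)$ setting, where $I_{\mathcal A}\neq X$ and one genuinely needs the order-limit description of $\Phi(T)$ on the atomic ideal. Your proof is complete once the coefficient computation for $PDTD^{-1}P^{-1}$ is written out, and that computation is routine.
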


\begin{proof}
Observe first that by \Cref{c00 - operator theoretical} we have $\mathscr L_n(c_{00})=\mathscr L(c_{00})$.
Since $I_{\mathcal A}=c_{00}(\Lambda)$,
by \Cref{reprezentacija avtomorfizma} we have
$$\Phi(T)=\sup \{PDSD^{-1}P^{-1}:\; S\in [0,T]\cap \mathscr A_0\}$$
for each positive operator $T$ on $c_{00}(\Lambda)$.
To prove $\Phi(T)=PDTD^{-1}P^{-1}$ we will apply \cite[Theorem 1.19]{Aliprantis:06}.
Note first, that the set $[0,T]\cap \mathscr A_0$ can be considered as an increasing net $(T_\alpha)_\alpha$ with supremum $T$.
Hence, for each positive vector $x$ we have
$0\leq T_\alpha x\nearrow Tx$. If we replace $x$ with $D^{-1}P^{-1}x$, we obtain $0\leq T_\alpha D^{-1}P^{-1}x\nearrow TD^{-1}P^{-1}x$.
Order continuity of $PD$ yields $0\leq PDT_\alpha D^{-1}P^{-1}x\nearrow PDTD^{-1}P^{-1}x$ so that $\Phi(T)=PDTD^{-1}P^{-1}$.
\end{proof}

\begin{example}
Let $\mathscr A$ be the algebra in $\mathscr L(c_{00})$ generated by the identity operator $I$ and $\{e_i\otimes \varphi_{e_j}:\; i,j\in \mathbb N\}$.
If $P$ is the permutation operator induced by the bijection $\pi$ defined as $\pi(2k-1)=2k$ and $\pi(2k)=2k-1$, then it is easy to see that $\Phi\colon T\mapsto PTP^{-1}$ is a positive automorphism of $\mathscr A$ which is not inner. 
\end{example}

\Cref{c00 version} is, in particular, applicable to subalgebras $\mathscr A$ of bounded operators on $\mathscr L(c_{00}(\Lambda))$. One could expect that $P$ and $D$ obtained by  \Cref{c00 version} are bounded. A direct verification shows that $P$ is indeed bounded, whereas to prove boundedness of $D$ we will need the following lemma which explicitly provides the operator norm of a bounded operator on the normed space $c_{00}(\Lambda)$. 

\begin{lemma}\label{Lc00 description}
A linear operator $T\colon c_{00}(\Lambda)\to c_{00}(\Lambda)$ is bounded if and only if
$$M:=\sup_{\lambda\in\Lambda} \sum_{\mu\in\Lambda} |\varphi_{e_\lambda}(Te_\mu)|<\infty.$$
Moreover, if $T$ is bounded then $\|T\|=M$.
\end{lemma}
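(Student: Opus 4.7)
\medskip

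The plan is to compute the operator norm directly using the explicit matrix representation $[T]_{\lambda\mu} := \varphi_{e_\lambda}(Te_\mu)$ of $T$ with respect to the canonical basis, showing both $\|T\| \le M$ and $M \le \|T\|$, which simultaneously establishes the equivalence $T$ bounded $\iff M < \infty$ and the norm formula.

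For the upper bound $\|T\| \le M$, I would take an arbitrary $x = \sum_{\mu \in \Lambda} x^{(\mu)} e_\mu \in c_{00}(\Lambda)$ and note that, since the sum is finite, linearity yields $Tx = \sum_\mu x^{(\mu)} Te_\mu$. Applying the lattice homomorphism $\varphi_{e_\lambda}$ and using the triangle inequality gives
$$
\bigabs{\varphi_{e_\lambda}(Tx)} \le \sum_{\mu \in \Lambda} \bigabs{x^{(\mu)}}\,\bigabs{\varphi_{e_\lambda}(Te_\mu)} \le \|x\|_\infty \sum_{\mu \in \Lambda} \bigabs{\varphi_{e_\lambda}(Te_\mu)} \le M\,\|x\|_\infty.
$$
Taking the supremum over $\lambda \in \Lambda$ yields $\|Tx\|_\infty \le M\|x\|_\infty$. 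In particular, whenever $M < \infty$, this proves $T$ is bounded with $\|T\| \le M$.

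For the reverse inequality $M \le \|T\|$, assume $T$ is bounded and fix $\lambda \in \Lambda$ together with a finite subset $F \subseteq \Lambda$. Define the test vector
$$
x_{\lambda,F} := \sum_{\mu \in F} \sgn\bigl(\varphi_{e_\lambda}(Te_\mu)\bigr)\, e_\mu \in c_{00}(\Lambda),
$$
which satisfies $\|x_{\lambda,F}\|_\infty \le 1$. A direct computation gives
$$
\varphi_{e_\lambda}(Tx_{\lambda,F}) = \sum_{\mu \in F} \bigabs{\varphi_{e_\lambda}(Te_\mu)},
$$
hence $\sum_{\mu \in F} |\varphi_{e_\lambda}(Te_\mu)| \le \|Tx_{\lambda,F}\|_\infty \le \|T\|$. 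Taking the supremum over all finite $F$ and then over $\lambda$ shows $M \le \|T\|$, which in particular forces $M < \infty$.

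Combining the two inequalities gives $\|T\| = M$ whenever either side is finite, and shows that boundedness of $T$ is equivalent to $M < \infty$. There is no real obstacle; the only mild care is to make sure that the test vectors $x_{\lambda,F}$ live in $c_{00}(\Lambda)$ (guaranteed because $F$ is finite) and to allow the sign function to vanish when the corresponding matrix entry is zero, so that $\|x_{\lambda,F}\|_\infty \le 1$ rather than $=1$, which does not affect the inequalities.
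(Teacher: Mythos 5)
Your proof is correct and follows essentially the same route as the paper: the upper bound $\|T\|\le M$ via the triangle inequality on the coordinates of $Tx$, and the lower bound via the same sign-vector test elements $\sum_{\mu\in F}\sgn(\varphi_{e_\lambda}(Te_\mu))e_\mu$ supported on finite sets. Your write-up is in fact slightly cleaner in the lower bound, where you use $|\varphi_{e_\lambda}(y)|\le\|y\|_\infty$ directly instead of the paper's maximum over a finite set containing the support of $Tx$.
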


\begin{proof}
Without loss of generality we may assume that $T$ is nonzero.
Assume first that $T$ is bounded and pick $\lambda\in \Lambda$. 
For a finite subset $F\subseteq\Lambda$ we define
$$x=\sum_{\mu\in F} \sgn (\varphi_{e_\lambda}(Te_\mu))e_\mu \in c_{00}(\Lambda)$$ and observe that $\|x\|\le 1$. Since $Tx\in c_{00}(\Lambda)$, for any finite subset $G\subseteq \Lambda$ containing the support $F_{Tx}$ of $Tx$ we have
$$Tx=\sum_{\lambda\in G} \left(\sum_{\mu\in F} \sgn (\varphi_{e_\lambda}(Te_\mu))\varphi_{e_\lambda}(Te_\mu)\right)e_\lambda.$$
Pick any $\lambda_0\in \Lambda$ and define $G:=F_{Tx}\cup\{\lambda_0\}$. As $\|x\|\le 1$, we obtain
\begin{align*}
\|T\|&\geq \|Tx\|=\max_{\lambda\in G} \left |\sum_{\mu\in F} \sgn (\varphi_{e_\lambda}(Te_\mu))\varphi_{e_\lambda}(Te_\mu)\right |= \max_{\lambda\in G} \sum_{\mu\in F} |\varphi_{e_\lambda}(Te_\mu)|\\
& \geq \sum_{\mu\in F} |\varphi_{e_{\lambda_0}}(Te_\mu)|.
\end{align*}
By definition of convergence we get $\|T\|\ge \sum_{\mu\in\Lambda} |\varphi_{e_{\lambda_0}}(Te_\mu)|$. Since $\lambda_0\in \Lambda$ was arbitrary, we conclude $\|T\|\geq M$.

To prove the converse statement, assume that $M<\infty$ and pick any unit vector $x=\sum_{\mu\in F_x} x_\mu e_\mu\in c_{00}(\Lambda)$ where $x_\mu=\varphi_{e_\mu}(x)$. Then $|x_\mu|\leq 1$ for every $\mu\in\Lambda$. Pick any $\lambda\in\Lambda$. Then the $\lambda$-th component of $Tx$ equals $\varphi_{e_\lambda}(Tx)=\sum_{\mu\in F_x}\varphi_{e_\lambda}(Te_\mu) x_\mu$ and so
$$|\varphi_{e_\lambda}(Tx)|\leq \sum_{\mu \in F_x}|\varphi_{e_\lambda}(Te_\mu)||x_\mu|\leq \sum_{\mu \in F_x}|\varphi_{e_\lambda}(Te_\mu)|\leq M.$$
Hence, $\|Tx\|\leq M$ for every unit vector $x\in c_{00}(\Lambda)$ which finally gives $\|T\|\leq M$.

For the moreover statement, observe that a combination of the conclusions above implies $\|T\|=M$.
\end{proof}

\begin{corollary}\label{c00 bounded version}
Let $\Lambda$ be a nonempty set. Then for every positive automorphism $\Phi$ of $\mathscr B(c_{00}(\Lambda))$ there exist a permutation operator $P$ and a bounded positive diagonal operator $D$ on $c_{00}(\Lambda)$ with bounded inverse such that
$$\Phi(T)=PDTD^{-1}P^{-1}$$ for each $T\in \mathscr B(c_{00}(\Lambda))$.
\end{corollary}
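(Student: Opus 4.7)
The plan begins by observing that $\mathscr B(c_{00}(\Lambda))$ is a subalgebra of $\mathscr L(c_{00}(\Lambda))$ containing $\mathscr A_0$, since each rank-one operator $e_\lambda\otimes\varphi_{e_\mu}$ is bounded with norm $1$. Thus \Cref{c00 version} supplies a bijection $\pi\colon\Lambda\to\Lambda$ and positive scalars $\{\delta_\lambda\}_{\lambda\in\Lambda}$ with $\Phi(T)=PDTD^{-1}P^{-1}$ for all $T\in\mathscr B(c_{00}(\Lambda))$, where $P$ is the permutation operator induced by $\pi$ and $D$ is the diagonal operator with coefficients $\delta_\lambda$. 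The bounds $\|P\|=\|P^{-1}\|=1$ are immediate from \Cref{Lc00 description}, so everything reduces to showing that $D$ and $D^{-1}$ are bounded, i.e.\ that $0<\inf_\lambda\delta_\lambda\leq\sup_\lambda\delta_\lambda<\infty$. The case $\Lambda$ finite is trivial, so we assume $\Lambda$ is infinite.

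The core step is to rule out $\sup_\lambda\delta_\lambda=\infty$ by contradiction. Assuming the contrary, every super-level set $\{\lambda:\delta_\lambda\geq c\}$ is infinite (otherwise $\sup\delta$ would be attained on a finite set). One then constructs inductively two disjoint sequences of pairwise distinct atoms $(\lambda_n),(\mu_n)$ with $\delta_{\lambda_n}\geq n^3\delta_{\mu_n}$: at step $n$, choose $\mu_n$ outside the finite set of previously selected atoms, then choose $\lambda_n$ from the infinite super-level set $\{\lambda:\delta_\lambda\geq n^3\delta_{\mu_n}\}$ avoiding all previous choices and $\mu_n$. Define
$$T:=\sum_{n=1}^\infty \tfrac{1}{n^2}\,e_{\lambda_n}\otimes\varphi_{e_{\mu_n}}.$$
Distinctness of the $\mu_n$ makes $Te_\nu$ at most a single summand, so $T$ is a well-defined operator on $c_{00}(\Lambda)$, and distinctness of the $\lambda_n$ forces at most one nonzero entry per row in its matrix, of magnitude $\leq 1$; hence $T\in\mathscr B(c_{00}(\Lambda))$ with $\|T\|\leq 1$ by \Cref{Lc00 description}. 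A direct computation using $\Phi(T)=PDTD^{-1}P^{-1}$ gives
$$\Phi(T)e_{\pi(\mu_n)}=\tfrac{\delta_{\lambda_n}}{n^2\delta_{\mu_n}}\,e_{\pi(\lambda_n)},\qquad \Phi(T)e_\nu=0\text{ for }\nu\notin\pi(\{\mu_n:n\geq 1\}).$$
Since $\delta_{\lambda_n}/(n^2\delta_{\mu_n})\geq n$ and the $\pi(\lambda_n)$ are distinct, the matrix row sums of $\Phi(T)$ diverge, so $\Phi(T)\notin\mathscr B(c_{00}(\Lambda))$ by \Cref{Lc00 description}, contradicting $\Phi$ being an automorphism of $\mathscr B(c_{00}(\Lambda))$.

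For the remaining inequality $\inf_\lambda\delta_\lambda>0$, the same argument applied to $\Phi^{-1}$ suffices: by \Cref{main theorem}(ii), $\Phi^{-1}$ is a positive algebra automorphism of $\mathscr B(c_{00}(\Lambda))$, and formula \eqref{formula2} shows that its diagonal coefficients are $1/\delta_{\pi^{-1}(\cdot)}$, whose supremum equals $1/\inf_\lambda\delta_\lambda$; thus $\inf_\lambda\delta_\lambda=0$ would again contradict the previous step. The main obstacle is to manufacture a single operator $T\in\mathscr B(c_{00}(\Lambda))$ whose image $\Phi(T)$ is unbounded: a single rank-one operator only probes one ratio $\delta_\lambda/\delta_\mu$, so one must aggregate infinitely many rank-one operators with carefully balanced weights $1/n^2$ chosen both to keep the rows of $T$ bounded and to permit the amplified rows of $\Phi(T)$ to diverge, all while respecting the constraint (forced by $c_{00}(\Lambda)$) that every operator must have finitely supported columns.
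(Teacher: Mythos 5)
Your argument is correct, and its skeleton matches the paper's: reduce via \Cref{c00 version} to showing that $D$ and $D^{-1}$ are bounded, use the explicit norm formula of \Cref{Lc00 description}, exhibit a single bounded operator whose image under $T\mapsto PDTD^{-1}P^{-1}$ would be unbounded if the diagonal coefficients were bad, and then dispose of the remaining inequality by running the same argument on $\Phi^{-1}$ (which is a positive automorphism by \Cref{main theorem}(ii)). Where you genuinely diverge is in the mechanism producing the contradiction. The paper concentrates all columns into a single row: for a fixed $\alpha$ and an arbitrary $(t_\mu)\in\ell^1(\Lambda)$ it sets $Te_\mu=t_\mu e_\alpha$, deduces $\sum_\mu d_\mu^{-1}\lvert t_\mu\rvert<\infty$ from boundedness of $DTD^{-1}$, and then invokes the uniform boundedness principle on $\ell^1(\Lambda)$ to conclude that $(d_\mu^{-1})_\mu$ is bounded. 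You instead spread the mass over pairwise distinct rows and columns, choosing the pairs $(\lambda_n,\mu_n)$ adaptively from the (infinite) super-level sets so that $\delta_{\lambda_n}\geq n^3\delta_{\mu_n}$, and the weights $1/n^2$ give a direct, completely elementary contradiction with no appeal to Banach--Steinhaus. Your route is self-contained and makes the obstruction concrete (it is really the ratios $\delta_\lambda/\delta_\mu$ that must be uniformly controlled); the paper's route is shorter once one is willing to use completeness of $\ell^1(\Lambda)$ and the uniform boundedness principle as a black box. Both are valid proofs of the corollary.
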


In particular, every positive automorphism $\Phi$ of $\mathscr B(c_{00}(\Lambda))$ is inner. 

\begin{proof}
By \Cref{c00 version} there exist a permutation operator $P$ and a positive diagonal operator $D$ such that $\Phi(T)=PDTD^{-1}P^{-1}$ for each $T\in \mathscr B(c_{00}(\Lambda))$. We need to prove that $D$ and $D^{-1}$ are bounded. It suffices to see that the sets of diagonal coefficients $\{d_\lambda:\; \lambda\in \Lambda\}$ and $\{d_\lambda^{-1}:\; \lambda\in \Lambda\}$ are bounded. To this end, we will apply \Cref{Lc00 description}. 

By assumption,  for every $T\in\mathscr B(c_{00}(\Lambda))$ we know that $\Phi(T)=PDTD^{-1}P^{-1}$ belongs to $\mathscr B(c_{00}(\Lambda))$ yielding that $DTD^{-1}$ is a bounded operator on $c_{00}(\Lambda)$.  Therefore, by \Cref{Lc00 description} we have
\begin{align*}
\|DTD^{-1}\| &= \sup_{\lambda\in\Lambda} \sum_{\mu\in\Lambda} |\varphi_{e_\lambda}(DTD^{-1}e_\mu)|.
\end{align*}
Let us fix $\alpha\in\Lambda$ and choose an absolutely convergent series $\sum_{\mu\in \Lambda}t_\mu$. 
For each $\mu\in\Lambda$ we define $Te_\mu=t_\mu e_\alpha$. By linearity, $T$ can be uniquely extended to a linear operator (again denoted by $T$) on $c_{00}(\Lambda)$. Using \Cref{Lc00 description} we see that $T$ is bounded. 

Since $DTD^{-1}$ is bounded and  $DTD^{-1}e_{\mu}=d_{\alpha}d_{\mu}^{-1} t_\mu e_\alpha$, by \Cref{Lc00 description} we conclude
$$\|DTD^{-1}\|=\sum_{\mu\in\Lambda} |{d_{\alpha}}d_{\mu}^{-1} t_\mu|=d_\alpha \sum_{\mu\in\Lambda} d_\mu^{-1}|t_{\mu}|<\infty.$$
Therefore, for every absolutely convergent series $\sum_{\mu\in \Lambda}t_\mu$ the series $\sum_{\mu\in\Lambda} d_\mu^{-1}|t_{\mu}|$ also converges.

For a finite subset $K\subseteq\Lambda$ we define a functional $\varphi_K\colon\ell^1(\Lambda)\to\RR$ as
$$\varphi_K((t_\mu)_\mu)=\sum_{\mu\in K} d_\mu^{-1}t_\mu.$$
Then $\|\varphi_K\|=\max_{\mu\in K} d_\mu^{-1}$. For a fixed $t=(t_\mu)_\mu$, the set 
$$\{\varphi_K(t):\; K\subseteq \Lambda\ {\rm finite}\}$$ is bounded since $|\varphi_K(t)|\le \sum_{\mu\in \Lambda} d_\mu^{-1}|t_\mu|$. The principle of uniform boundedness implies that the norms $\{\|\varphi_K\|:\; K\subseteq \Lambda\ {\rm finite}\}$ are bounded yielding that $(d_\mu^{-1})_{\mu\in\Lambda}$ is bounded. Therefore $D^{-1}\in\mathscr B(c_{00}(\Lambda))$.

Since, clearly the linear mapping $\Psi\colon T\mapsto P^{-1}D^{-1}TDP$ is the inverse of $\Phi$, it is a positive automorphism of $\mathscr B(c_{00}(\Lambda))$. By the proof above, $D$ is bounded which completes the proof.
\end{proof}

\section{Concluding remarks on order bounded functionals}\label{Concluding}

By \Cref{c00 - operator theoretical}, an Archimedean vector lattice $X$ admits a non-order bounded functional if and only if $X$ is not lattice isomorphic to a vector lattice of the form $c_{00}(\Lambda)$ for some set $\Lambda$. 
In non-Archimedean vector lattices, constructing non-order bounded functionals is simple, as the so-called infinitely small elements come to our help. Recall that a positive vector $x$ of a vector lattice $X$ is \emph{infinitely small} whenever there exists a positive vector $y$ such that for all $n\in \mathbb N$ we have $0\leq nx\leq y$. Clearly, a vector lattice is not Archimedean if and only if it contains nonzero infinitely small positive vectors. 

\begin{example}\label{non archimedean non order bounded}
    Let $X$ be a non-Archimedean vector lattice. Then there exist positive nonzero vectors $x,y\in X$ such that for each $n\in \mathbb N$ we have $0<nx\leq y$. Let $\varphi$ be any linear functional on $X$ such that $\varphi(x)=1$. Then for each $\alpha>0$  we have $\alpha=\varphi(\alpha x)$. Since $0\leq \alpha\leq n$ yields $0\leq \alpha x\leq nx\leq y$, we conclude   $[0,\infty)\subseteq\varphi([0,y])$, and so, $\varphi$ is not order bounded. In particular, every order bounded functional vanishes on the set of infinitely small vectors. 
\end{example}

Recall that the lexicographical product $X\circ Y$ of vector lattices $X$ and $Y$ is the vector space $X\times Y\cong X\oplus Y$ equipped with the lexicographical partial ordering $\leq_{\mathrm{Lex}}$ defined as 
$(x_1, y_1) \leq_{\mathrm{Lex}} (x_2, y_2)$ if $x_1 < x_2$ or $x_1=x_2$ and $y_1\leq y_2$. It is a standard exercise to prove that $X\circ Y$ is a vector lattice. By $\mathbb R^n_{\rm{Lex}}$ we denote the $n$-dimensional vector space $\mathbb R^n$ of all $n$-tuples ordered lexicographically.
Similarly, $\mathbb R^\infty_{\rm{Lex}}$ denotes the space of all real sequences ordered lexicographically. 
Therefore, $\mathbb R\circ \mathbb R$ is the lexicographically ordered real plane $\mathbb R^2_{\rm{Lex}}$, and 
$\mathbb R \circ (\mathbb R \circ \mathbb R)=\mathbb R\circ \mathbb R^2_{\mathrm{Lex}}$ is the lexicographically ordered real vector space $\mathbb R^3$. The positive cones of the these examples fall into the scope of the so-called ``lexicographic cones" studied by Wortel in \cite{Wor19}.

Since the lexicographically ordered real plane $\mathbb R^2_{\rm{Lex}}$ is not Archimedean, $\mathbb R^2_{\rm{Lex}}$ admits a non-order bounded linear functional. As $0\leq ne_2\leq e_1$ for all $n\in \mathbb N$, every order bounded functional $\varphi$ on $\mathbb R^2_{\rm{Lex}}$ satisfies $\varphi(e_2)=0$. Hence, it seems that the order dual of $\mathbb R^2_{\mathrm{Lex}}$ is isomorphic to the first copy of $\mathbb R$ in the lexicographical product. The remaining part of the paper is devoted to determine order duals of the following lexicographically ordered vector lattices $\mathbb R^n_{\rm{Lex}}$ and $\mathbb R^\infty_{\rm{Lex}}$. We start with a more general result.

\begin{theorem}\label{order dual of lex}
For vector lattices $X$ and $Y$ the mapping
$\Phi\colon (X\circ Y)^\sim \to X^\sim$ defined as $\varphi \to \varphi|_X$ is a lattice isomorphism.
\end{theorem}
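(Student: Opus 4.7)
My plan is to write down an explicit inverse $\Psi$ of $\Phi$ and then check that both $\Phi$ and $\Psi$ are positive, so that $\Phi$ is an order isomorphism between vector lattices, and hence a lattice isomorphism.

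The key structural fact, which does all the work, is that for any nontrivial $X$ every order bounded functional on $X \circ Y$ must vanish identically on $\{0\} \times Y$. To see this, pick any strictly positive $x \in X$ and any $y \in Y^+$: the definition of the lexicographic order gives $0 \leq_{\mathrm{Lex}} n(0, y) \leq_{\mathrm{Lex}} (x, 0)$ for every $n \in \mathbb{N}$, so $(0, y)$ is infinitely small in $X \circ Y$, and exactly the argument of \Cref{non archimedean non order bounded} forces $\varphi(0, y) = 0$ for every $\varphi \in (X \circ Y)^\sim$; decomposing an arbitrary $y \in Y$ into positive and negative parts gives the vanishing on all of $\{0\} \times Y$. (The degenerate case $X = \{0\}$ need not be considered.)

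Granting this, the inclusion $\iota \colon X \to X \circ Y$, $x \mapsto (x, 0)$, preserves and reflects the order (an immediate check from the definition of $\leq_{\mathrm{Lex}}$), so $\Phi(\varphi) = \varphi \circ \iota$ is a well-defined positive linear map into $X^\sim$. I then define $\Psi \colon X^\sim \to (X \circ Y)^\sim$ by $\Psi(\psi)(x, y) = \psi(x)$. Linearity is obvious; for order boundedness, unwinding the lex definition shows that $(x, y) \in [(x_1, y_1), (x_2, y_2)]$ implies $x \in [x_1, x_2]$, and hence $\Psi(\psi)$ maps the given interval into the bounded set $\psi([x_1, x_2])$. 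The composition $\Phi \circ \Psi = \mathrm{id}$ is trivial, while $\Psi \circ \Phi = \mathrm{id}$ follows from the decomposition $\varphi(x, y) = \varphi(x, 0) + \varphi(0, y) = \varphi(x, 0)$ together with the vanishing lemma.

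Finally $\Psi$ is positive --- if $(x, y) \geq_{\mathrm{Lex}} 0$ then $x \geq 0$ in $X$, so $\Psi(\psi)(x, y) = \psi(x) \geq 0$ whenever $\psi \geq 0$ --- so $\Phi$ is a linear bijection that preserves order in both directions, and thus preserves $\vee$ and $\wedge$. This yields the desired lattice isomorphism. The only real obstacle is the infinitely-small observation pinning down $\{0\} \times Y$ as the kernel of every order bounded functional; the remainder is a direct computation.
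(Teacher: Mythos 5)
Your proposal is correct and follows essentially the same route as the paper: the entire argument hinges on the observation that every order bounded functional on $X\circ Y$ vanishes on $\{0\}\times Y$ because $0\leq_{\mathrm{Lex}} n(0,y)\leq_{\mathrm{Lex}}(x,0)$ for $x>0$, which is exactly the paper's inline computation (you merely cite it as the infinitely-small phenomenon of \Cref{non archimedean non order bounded}). Packaging the rest as an explicit bipositive inverse $\Psi$ rather than checking injectivity and surjectivity separately is only a cosmetic difference, and your explicit flagging of the degenerate case $X=\{0\}$ (where the statement would fail) is a reasonable reading of the paper's implicit assumption.
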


In particular, if $X^\sim=\{0\}$, then for every vector lattice $Y$ we have $(X\circ Y)^\sim=\{0\}$.

\begin{proof}
We claim that the mapping $\Phi\colon (X\circ Y)^\sim \to X^\sim$ defined as $\Phi(\varphi)=\varphi|_X$ is a lattice isomorphism.
If $\varphi$ is order bounded on $X\circ Y$, then $\Phi(\varphi)$ is order bounded on $X$ since every interval $[a,b]_X$ in $X$ is contained in the interval $[a,b]_{X\circ Y}$ in $X\circ Y$ via the natural identification of $X$ and $X\times\{0\}$.
Clearly, $\Phi$ is a linear operator.

We claim that every order bounded functional $\varphi \in (X\circ Y)^\sim$ is zero on $Y$.
If this were not the case, then there would exist $0<y\in Y$ such that $\varphi(y)\neq 0$. By replacing $\varphi$ with $-\varphi$, if necessary, we may suppose that $\varphi(y)>0$.  Pick any $0\leq x\in X$. Then $0\leq \lambda y\leq x+y$ for all $\lambda\geq 0$ which yields that $\varphi([0,x+y])$ contains  $[0,\infty)$. This is in contradiction with order boundedness of $\varphi$.

If $\Phi(\varphi)=\varphi|_X=0$ for some $\varphi\in (X\circ Y)^\sim$, then $\varphi=0$ as $\varphi|_Y=0$, which proves injectivity of $\Phi$.
Since for every $\varphi\in X^\sim$ we have $\varphi+0\in (X\circ Y)^\sim$ it follows that $\Phi$ is surjective.
To prove that $\Phi$ is a lattice isomorphism, observe that $\varphi+0$ is positive on $X\circ Y$ if and only if $\varphi$ is positive on $X$.
\end{proof}

\begin{corollary}
For $n,m\in \mathbb N$ the order dual of $\mathbb R^n\circ \mathbb R^m$ is lattice isomorphic to $\mathbb R^n$. The lattice isomorphism $\Phi\colon (\mathbb R^n\circ \mathbb R^m)^{\sim} \to \mathbb R^n$ is given as
$$\Phi(\varphi)=(\varphi(e_{1}),\ldots,\varphi(e_{n})).$$
\end{corollary}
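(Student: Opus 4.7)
The plan is to obtain the corollary as an immediate application of \Cref{order dual of lex} with $X=\mathbb R^n$ and $Y=\mathbb R^m$, together with the standard identification of $(\mathbb R^n)^\sim$ with $\mathbb R^n$. Concretely, \Cref{order dual of lex} gives a lattice isomorphism $(\mathbb R^n\circ \mathbb R^m)^\sim\to (\mathbb R^n)^\sim$ by restriction $\varphi\mapsto \varphi|_{\mathbb R^n}$, so the task reduces to describing $(\mathbb R^n)^\sim$ explicitly.

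Next I would observe that $\mathbb R^n$ (with coordinatewise order) is lattice isomorphic to $c_{00}(\{1,\dots,n\})$, hence by \Cref{c00 - operator theoretical} every linear functional on $\mathbb R^n$ is order bounded; equivalently, $(\mathbb R^n)^\sim=(\mathbb R^n)'$. The evaluation map $\psi\mapsto (\psi(e_1),\dots,\psi(e_n))$ is then a linear bijection from $(\mathbb R^n)^\sim$ onto $\mathbb R^n$, and it is a lattice isomorphism because a functional $\psi$ on the atomic vector lattice $\mathbb R^n$ is positive precisely when $\psi(e_i)\ge 0$ for each $i$ (this is a special case of \Cref{nicelni operator atomi}(ii)).

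Finally I would compose the two lattice isomorphisms: for $\varphi\in (\mathbb R^n\circ \mathbb R^m)^\sim$, the image is
$$\Phi(\varphi)=(\varphi|_{\mathbb R^n}(e_1),\dots,\varphi|_{\mathbb R^n}(e_n))=(\varphi(e_1),\dots,\varphi(e_n)),$$
under the natural embedding of $\mathbb R^n$ as the first component of $\mathbb R^n\circ \mathbb R^m$. Since both maps being composed are lattice isomorphisms, so is $\Phi$.

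There is no real obstacle here; the only point requiring a line of care is to note that the restriction in \Cref{order dual of lex} is well defined because each interval $[a,b]$ in $\mathbb R^n$ embeds into an interval of $\mathbb R^n\circ \mathbb R^m$, and that the ``second coordinate" part $\mathbb R^m$ is annihilated by every order bounded functional (already contained in the proof of \Cref{order dual of lex}), so no information is lost in the restriction.
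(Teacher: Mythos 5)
Your proposal is correct and is exactly the argument the paper intends: the corollary is stated as an immediate consequence of \Cref{order dual of lex} applied with $X=\mathbb R^n$, $Y=\mathbb R^m$, composed with the standard identification of $(\mathbb R^n)^\sim$ with $\mathbb R^n$ via evaluation at the standard basis. (The only nitpick is that the positivity criterion you want is the trivial finite-dimensional fact that $\psi\ge 0$ iff $\psi(e_i)\ge 0$ for all $i$, which is closer to part (i) than part (ii) of \Cref{nicelni operator atomi}; this does not affect the argument.)
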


\begin{corollary}
For every $n\in \mathbb N\cup \{\infty\}$ the order dual of $\mathbb R^n_{\rm{Lex}}$ is lattice isomorphic to $\mathbb R$.
The lattice isomorphism $\Phi\colon (\mathbb R^n_{\rm{Lex}})^{\sim} \to \mathbb R$ is given as
$\Phi(\varphi)=\varphi(e_1)$.
\end{corollary}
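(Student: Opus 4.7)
The plan is to deduce this corollary directly from \Cref{order dual of lex} by decomposing $\mathbb R^n_{\rm{Lex}}$ as a lexicographic product whose first factor is $\mathbb R$. For $n=1$, the statement reduces to the trivial identification $\mathbb R^\sim=\mathbb R$, and the functional $\varphi\mapsto \varphi(e_1)$ is the canonical isomorphism, so nothing further is required. For $n\geq 2$ finite, I would write $\mathbb R^n_{\rm{Lex}}\cong \mathbb R\circ \mathbb R^{n-1}_{\rm{Lex}}$ via the obvious identification $(x_1,\ldots,x_n)\leftrightarrow (x_1,(x_2,\ldots,x_n))$, which is immediately a lattice isomorphism since the lexicographic comparison of $(x_1,\ldots,x_n)$ and $(y_1,\ldots,y_n)$ compares $x_1$ with $y_1$ first, and then defers to the lexicographic comparison on the tails when $x_1=y_1$.

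For $n=\infty$, the same idea works: the shift $(x_1,x_2,x_3,\ldots)\leftrightarrow(x_1,(x_2,x_3,\ldots))$ gives a lattice isomorphism $\mathbb R^\infty_{\rm{Lex}}\cong \mathbb R\circ \mathbb R^\infty_{\rm{Lex}}$, since the tail space $\{(x_2,x_3,\ldots)\}$ carrying the lexicographic order is itself a copy of $\mathbb R^\infty_{\rm{Lex}}$. Once this identification is in hand, \Cref{order dual of lex} applied with $X=\mathbb R$ and $Y=\mathbb R^{n-1}_{\rm{Lex}}$ (or $Y=\mathbb R^\infty_{\rm{Lex}}$ in the infinite case) gives a lattice isomorphism $(\mathbb R^n_{\rm{Lex}})^\sim \cong \mathbb R^\sim=\mathbb R$ implemented by restriction to the first factor $\mathbb R\cdot e_1$.

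Finally I would unwind the formula: under the canonical identification of $\mathbb R^\sim$ with $\mathbb R$ via $\psi\mapsto \psi(1)$, the restriction map sends $\varphi\in (\mathbb R^n_{\rm{Lex}})^\sim$ to $\varphi|_{\mathbb R\cdot e_1}$, and evaluation of the latter at $1\in\mathbb R$ yields $\varphi(e_1)$. This gives the explicit formula $\Phi(\varphi)=\varphi(e_1)$ claimed in the corollary. There is no real obstacle here; the only thing to be careful about is that in the infinite case the ``tail'' remains $\mathbb R^\infty_{\rm{Lex}}$ and so a single application of \Cref{order dual of lex} suffices without any induction.
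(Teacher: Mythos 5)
Your proposal is correct and follows essentially the same route as the paper: decompose $\mathbb R^n_{\rm{Lex}}\cong \mathbb R\circ\mathbb R^{n-1}_{\rm{Lex}}$ (and $\mathbb R^\infty_{\rm{Lex}}\cong \mathbb R\circ\mathbb R^\infty_{\rm{Lex}}$) and apply \Cref{order dual of lex} with first factor $\mathbb R$. Your write-up simply spells out the details (the $n=1$ base case, the verification that the shift is a lattice isomorphism, and the unwinding of the formula $\Phi(\varphi)=\varphi(e_1)$) that the paper leaves implicit.
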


\begin{proof}
Since for $n\in \mathbb N$ we have $\mathbb R^n_{\rm{Lex}}\cong \mathbb R \circ \mathbb R^{n-1}_{\rm{Lex}}$ and $\mathbb R^\infty_{\rm{Lex}}\cong \mathbb R \circ \mathbb R^{\infty}_{\rm{Lex}}$, the order dual of $\mathbb R^n_{\rm{Lex}}$ is lattice isomorphic to the order dual of the first copy of $\mathbb R$ by \Cref{order dual of lex}.
\end{proof}

\subsection*{Acknowledgements}
The first author was supported by the Slovenian Research and Innovation Agency program P1-0222.
The second author was supported by the Slovenian Research and Innovation Agency program P1-0222 and grant J1-50002.

\end{document}